\theoremstyle{plain}
\newtheorem{corollary}[equation]{Corollary}
\newtheorem{lemma}[equation]{Lemma}
\newtheorem{proposition}[equation]{Proposition}
\newtheorem{theorem}[equation]{Theorem}
\theoremstyle{remark}
\newtheorem{remark}[equation]{Remark}
\newtheorem{remarks}[equation]{Remarks}
\numberwithin{equation}{subsection}
\renewcommand{\subsection}{\@startsection{subsection}{2}{0pt}{-3ex
plus -1ex minus -0.2ex}{-2mm plus -0pt minus
-2pt}{\normalfont\bfseries}} \makeatother
\newcommand{\scr}[1]{\mathscr{#1}}
\newcommand{\dis}{\displaystyle}
\newcommand{\beq}{\begin{equation}\label}
\newcommand{\eeq}{\end{equation}}
\newcommand{\hdot}{{\:\raisebox{2pt}{\text{\circle*{0.7}}}}}
\newcommand{\idot}{{\:\raisebox{2pt}{\text{\circle*{0.7}}}}}
\newcommand{\erem}{\hphantom{.}\hfill$\lozenge$\end{remark}}
\DeclareMathOperator{\ggr}{\mathrm{gr}}
\DeclareMathOperator{\Lie}{\mathrm{Lie}}
\DeclareMathOperator{\ann}{\mathrm{Ann}}
\DeclareMathOperator{\ddim}{\mathrm{dim}}
\DeclareMathOperator{\Specm}{\mathrm{Spec}}
\DeclareMathOperator{\ad}{\mathrm{ad}}
\DeclareMathOperator{\Ad}{\mathrm{Ad}}
\DeclareMathOperator{\Sym}{{\mathcal{S}{ym}}}
\DeclareMathOperator{\rk}{\mathrm{rk}}
\DeclareMathOperator{\Hom}{\mathrm{Hom}}
\DeclareMathOperator{\Ext}{\mathrm{Ext}}
\DeclareMathOperator{\Rep}{{\mathsf{Rep}}}
\DeclareMathOperator{\Vect}{{\mathsf{Vect}}}
\DeclareMathOperator{\Ker}{{\mathsf{Ker}}}
\newcommand{\en}{\enspace }
\newcommand{\mto}{\mapsto}
\newcommand{\inv}{^{-1}}
\newcommand{\vi}{${\en\mathsf {(i)}}\;$}
\newcommand{\vii}{${\;\mathsf {(ii)}}\;$}
\newcommand{\viii}{${\mathsf {(iii)}}\;$}
\newcommand{\iv}{${\sf {(iv)}}\;$}
\renewcommand{\dis}{\displaystyle}
\renewcommand{\mto}{\mapsto}
\newcommand{\into}{{}^{\,}\hookrightarrow^{\,}}
\newcommand{\too}{\,\longrightarrow\,}
\newcommand{\onto}{\twoheadrightarrow}
\newcommand{\cd}{\!\cdot\! }
\newcommand{\sym}{\operatorname{S}\!}
\newcommand{\U}{\operatorname{U}\!}
\def\C{\mathbb C}
\def\Z{\mathbb Z }
\def\OO{\gr }
\def\m{{\mathfrak{m}}}
\def\nf{\mathfrak{n}}
\def\tf{\mathfrak{t}}
\def\pb{$\bullet\quad$\parbox[t]{140mm}}
\def\wg{{\widetilde{\mathfrak{g}}}}
\def\wgc{{\widetilde{\g}^*_\chi}}
\def\B{{\mathcal B}}
\newcommand{\iso}{{\,\stackrel{_\sim}{\to}\,}}
\def\ccirc{{{}_{^{\,^\circ}}}}
\def\BG{{\mathbb G}}
\def\res{{\mathsf{res}}}
\def\WW{{\scr S}}
\def\L{{\cal L}}
\def\MM{{\mathcal M}}
\def\Om{\Omega}
\renewcommand{\o}{\otimes}
\def\hw{{\h^*\!/W}}
\def\pgr{{\scr P}(\gr)}
\def\<{\langle\kern-.08cm\langle}
\def\>{\rangle\kern-.08cm\rangle}
\def\XX{\overline{X}}
\def\YY{\overline{Y}}
\def\b{{\mathfrak b}}
\def\la{\lambda}
\def\fk{\mathfrak{k}}
\def\g{\mathfrak{g}}
\def\h{\mathfrak{h}}
\def\m{\mathfrak{m}}
\def\sat{{\mathbf S}}
\def\e{{e}}
\def\f{{f}}
\def\IH{{I\!H}}
\def\cg{\C[\g]}
\def\BT{{\mathbb T}}
\def\gde{\g^\e}
\newcommand{\ab}{{\hskip 8mm}}
\newcommand{\rs}{{\operatorname{reg}}}
\newcommand{\gr}{{\mathsf {Gr}}}
\newcommand{\gd}{{\mathfrak{g}}}
\newcommand{\hd}{{\mathfrak{h}}}
\newcommand{\Gv}{{{}^L\!G}}
\newcommand{\Tv}{{{}^L\!T}}
\newcommand{\sll}{\mathfrak{s}\mathfrak{l}_2}
\newcommand{\sset}{{\subset}}
\newcommand{\oo}{{\mathcal O}}
\begin{document}

\date{}

\title{\Large{\textbf{Variations on themes of Kostant}}
\author{\large
Victor Ginzburg
}}
\maketitle
\centerline{\em To Bert Kostant with admiration}
\vskip 25pt

\begin{abstract}
Let  $\g$ be a complex semisimple Lie algebra,
and let $\Gv$ be a complex  semisimple group with trivial center
whose root system is dual to that of $\g$.
We establish a graded algebra isomorphism   $H^\hdot(X_\lambda, \C)\cong
\sym \g^e/I_\lambda$, where $X_\lambda$ is an arbitrary spherical
 Schubert variety in the loop Grassmannian for $\Gv$,
and $I_\lambda$ is an appropriate ideal in  the symmetric algebra
of $\g^e$, the
 centralizer  of a principal nilpotent in $\g$.

We also discuss a `topological' proof of Kostant's  result 
on the structure of $\C[\g]$.
\end{abstract}


\section{Cohomology of `spherical' Schubert varieties}  \label{proof1}
In this paper, we discuss a few geometric results which
were, to a great extent,  inspired by three fundamental
papers of Bertram Kostant \cite{Ko1}-\cite{Ko3}.

\subsection{}\label{notat}
Let $\g$ be a complex semisimple Lie algebra and write
$\g^x\sset\g$ for the  centralizer of an element $x\in\g$.  We fix a 
principal $\sll$-triple $\langle h,\e,\f \rangle \subset \gd.$
Thus, $\h:=\g^h$  is a Cartan subalgebra of $\g$.
 The element $\e \in \gd$ is a principal
nilpotent and
we make the choice of positive roots of $(\g,\h)$ so that
$e$ is contained in the span of {\em simple} root vectors.

We write  $\hd_{_\Z}^*\sset\h^*$ for   the weight lattice of $(\g,\h)$.
Given an  finite dimensional $\g$-module $V$ and a  weight $\mu \in \hd_{_\Z}^*$,
let $V(\mu)\sset V$ denote the corresponding $\mu$-weight space,
let $\Specm V\sset\hd_{_\Z}^*$ be the set formed
by the weights which occur in $V$ with nonzero multiplicity,
ie. such that $V(\mu)\neq 0$.
We ignore weight multiplicities and let $I(V)\sset \C[\h^*]$
denote the ideal of polynomials vanishing at the  set  $\Specm
V_\lambda$,
viewed as a finite {\em reduced} subscheme in $\h^*$.
 Thus,  the coordinate
ring $\C[\Specm V_\lambda]=\C[\h^*]/I(V)$ is  a finite dimensional algebra.

Write $\U\fk$, resp. $\sym\fk,$ for the universal enveloping, resp. symmetric, algebra
of a Lie algebra $\fk$. A filtration on $\fk$ gives rise to a filtration
on $\U\fk$, resp. on $\sym\fk,$ not to be confused with the standard
increasing
filtration on an enveloping algebra.

Following B. Kostant and R. Brylinski,  on $\h,$ one introduces an increasing filtration
$F_\idot\h,$  where $F_k\h:=\{x\in\h\mid
\ad^{k+1}e(x)=0\}, \ k=0,1,\ldots$. The  induced filtration 
gives $\U\h$ the structure of a  nonnegatively filtered algebra.
Clearly, one has $\U\h=\sym\h=\C[\h^*]$.
In particular, we may (and will) view $\C[\Specm V_\lambda]$
as a quotient of the algebra $\U\h$. In this way, the  algebra $\C[\Specm
V_\lambda]$
acquires an increasing filtration that descends from the Kostant-Brylinski
filtration on $\U\h$. For the associated graded
algebras,
we have $\ggr^F\C[\Specm V_\lambda]=\ggr^F\U\h/\ggr^F I(V).$

\subsection{}\label{Ldual}
Write $\Gv$ for 
 a split connected
semisimple group (called {\em Langlands dual group}) that has  trivial center,
i.e., is of adjoint type, and that has a maximal torus, $\Tv\sset \Gv,$
such that, we have $\Lie\Tv=\h^*$ and the root system of $(\Gv,\Tv)$ is dual to that of $\g$.
Let $W$ denote the Weyl group, the same  both  for   $(\g,\h)$ and for
 $(\Gv,\Tv)$.

 Let $\C(\!(z)\!)$ be the field of formal
Laurent series and $\C[[z]] \sset\C(\!(z)\!)$ its ring of integers,
 the ring of
formal power series.
Let $\Gv(\!(z)\!)$, resp. $\Gv[[z]]$,
denote the group of  $\C(\!(z)\!)$-rational, resp. $\C[[z]]$-rational,
points of $\Gv$. 
The coset space $\gr := \Gv(\!(z)\!)/\Gv[[z]]$ is called
the {\em loop Grassmannian}.

The space $\gr$
has a natural structure of an ind-scheme,
a direct limit of an ascending chain
of projective varieties of increasing dimension, see
 e.g. \cite[\S 1.2]{G1} or \cite{Lu} for an elementary,
and \cite{BD} for a much more elaborate treatment.
There are canonical bijections
\beq{lattice}
\gr^{\,\Tv}=\Hom_\text{alg. group}(\C^\times,\Tv)=\hd_{_\Z}^*\en(\sset\h^*).
\eeq

Here, the space on the left denotes the fixed point set
of the left $\Tv$-action on $\gr$, and the $\Hom$-space in the middle denotes the coweight lattice of
$\Tv$.
Let $\OO_\lambda$ denote  the 
$\Gv[[z]]$-orbit of the   $\Tv$-fixed point
corresponding to such a coweight $\lambda$.
In this way, one obtains a bijection between the set of
 $\Gv[[z]]$-orbits in $\gr$  and
the set $\hd_{_\Z}^*/W$, of the Weyl group orbits in $\hd_{_\Z}^*$.
 The closure, $\overline{\OO}_\lambda\sset \gr$, to be referred to as 
`spherical Schubert variety',  is known
to be a finite dimensional projective variety,  singular in general.

\subsection{} \label{g^e}
For any Lie algebra $\fk$,
a $\fk$-module $V$, and a subset $S\sset V$, resp. $J\sset \U\fk,$ we put
\beq{eqnot}
\ann[\fk;\, S]:=\{ u\in \U\fk\mid u(s)=0\en \forall s\in S\},
\en\;\text{resp.}\en\;
V^J:=\{v\in V\mid u(v)=0\en \forall u\in J\}.
\eeq

Given an anti-dominant weight $\la \in\hd_{_\Z}^*$,
let $V_\la$ be an irreducible finite dimensional $\g$-representation 
with  lowest weight $\la$, and let $v_\la\in V_\la$ be  a lowest weight
vector. 

The centralizer of $e$ is  an $\ad h$-stable
abelian Lie subalgebra  $\gde\sset\gd$.
The grading induced by the $\ad h$-action
 makes  $\U\gde$  a  nonnegatively  graded
commutative
algebra. 
It is clear that 
 $\ann[\g^e;\, v_\la]$, the annihilator
of the element $v_\la$, is a graded ideal in
 $\U\gde$. Thus, the quotient $\U\gde/\ann[\g^e;\, v_\la]$ is a  graded finite
dimensional commutative algebra.

In this paper, we write $H^\hdot(-)$
for singular cohomology with $\C$-coefficients.
One of our main results reads

\begin{theorem} \label{peterson} 
The $\U\g^e$-action on $V_\la$ factors through the quotient
$\U\gde/\ann[\g^e;\, v_\la]$, and there are natural
graded algebra isomorphisms
\beq{2iso}H^\hdot(\overline{\OO}_\lambda) \simeq \U\gde/\ann[\g^e;\, v_\la]\simeq
\ggr^F\C[\Specm V_\lambda].
\eeq
\end{theorem}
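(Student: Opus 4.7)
The plan is to thread together three classical inputs: the geometric Satake equivalence, which identifies $\IH^\hdot(\overline{\OO}_\lambda)$ with $V_\lambda$ as a graded vector space; the algebra isomorphism $H^\hdot(\gr)\iso\sym\g^e = \U\g^e$, reflecting the cell structure on $\gr$ and the role of $\g^e$; and Brylinski--Kostant's theorem relating the filtration $F_\idot\h$ to weight data of $V_\lambda$. All three are available in the literature; the work is in combining them.

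First I would dispose of two algebraic preliminaries. Because $\U\g^e$ is commutative, the claim that the action on $V_\lambda$ factors through $\U\g^e/\ann[\g^e;v_\lambda]$ is equivalent to the cyclicity statement $V_\lambda = \U\g^e\cd v_\lambda$; this should follow from Kostant's structural results for $\g^e$ combined with the extremality of the weight $\la$. For the second isomorphism in \eqref{2iso}, I would use the principal $\sll$-triple to build a canonical identification $\sym\g^e\iso\ggr^F\U\h$: the Kostant--Brylinski filtration $F_\idot\h$ is designed precisely so that its associated graded, viewed as a polynomial algebra on the $\ad h$-graded pieces of $\h$, matches $\sym\g^e$ with its $\ad h$-grading. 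Brylinski's theorem on weight multiplicities then translates into the statement that the ideal $\ann[\g^e;v_\lambda]\sset\sym\g^e$ corresponds to $\ggr^F I(V_\lambda)\sset\ggr^F\U\h$.

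The geometric core is the first isomorphism in \eqref{2iso}. The inclusion $i_\lambda:\overline{\OO}_\lambda\into\gr$ induces a pullback $i_\lambda^*:H^\hdot(\gr)\onto H^\hdot(\overline{\OO}_\lambda)$; surjectivity follows because $\gr$ admits a decomposition by complex affine Schubert cells and $\overline{\OO}_\lambda$ is a closed union of such cells. Via the algebra isomorphism $H^\hdot(\gr)\cong\U\g^e$, it remains to identify $\Ker i_\lambda^*$ with $\ann[\g^e;v_\lambda]$. For this I would endow $\IH^\hdot(\overline{\OO}_\lambda)\cong V_\lambda$ with its natural $H^\hdot(\gr)$-module structure via cap product and compare it with the algebraic $\U\g^e$-action on $V_\lambda$. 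The canonical map $H^\hdot(\overline{\OO}_\lambda)\to\IH^\hdot(\overline{\OO}_\lambda)$ sends the unit class $1\in H^0$ to a nonzero vector of extremal weight, necessarily a scalar multiple of $v_\lambda$; a dimension count then forces $\Ker i_\lambda^* = \ann[\g^e;v_\lambda]$, and cyclicity follows a posteriori.

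The main obstacle I expect is verifying that the cap-product $H^\hdot(\gr)$-module structure on $\IH^\hdot(\overline{\OO}_\lambda)$ agrees with the algebraic $\U\g^e$-action on $V_\lambda$ under Satake, with the unit class $1$ identified with $v_\lambda$ up to scalar. Each of the three inputs is well-documented in isolation, but weaving them together requires tracking the cohomological grading, the $\ad h$-grading on $\g^e$, and the Kostant--Brylinski filtration on $\h$ consistently through the chain of identifications, without sign or normalization drift, and checking naturality of the resulting isomorphisms in $\lambda$.
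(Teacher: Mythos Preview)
Your overall strategy—use geometric Satake to identify $\IH^\hdot(\overline{\OO}_\lambda)\cong V_\lambda$ and $H^\hdot(\gr)\cong\U\g^e$, then analyze $\Ker i_\lambda^*$ via the map $\kappa_*:H^\hdot(\overline{\OO}_\lambda)\to\IH^\hdot(\overline{\OO}_\lambda)$—is the paper's strategy too. But two points go wrong. First, the factoring statement is \emph{not} equivalent to cyclicity: factoring means $\ann[\g^e;v_\lambda]=\ann[\g^e;V_\lambda]$, which is strictly weaker than $V_\lambda=\U\g^e\cdot v_\lambda$. In fact cyclicity holds iff every weight of $V_\lambda$ has multiplicity at most one (this is the paper's Corollary~\ref{mult1}), so it fails for most $\lambda$; neither your direct argument nor your ``a posteriori'' claim can succeed. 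The theorem asserts only the weaker factoring.

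The more serious gap is the second isomorphism. You assert that Brylinski's theorem identifies $\ann[\g^e;v_\lambda]$ with $\ggr^F I(V_\lambda)$, and then use the resulting dimension equality $\dim\U\g^e/\ann[\g^e;v_\lambda]=|\Specm V_\lambda|$ to close the first isomorphism. But Brylinski's theorem concerns the $e$-filtration on individual weight spaces $V_\lambda(\mu)$; it does not compute the ideal $\ann[\g^e;v_\lambda]$. Semicontinuity along the family $\g^{\eta+t\rho}$ gives only $\dim\U\g^e\cdot v_\lambda\le|\Specm V_\lambda|$, the wrong direction for your count. What is actually needed—and what the paper supplies as an independent geometric input—is the \emph{injectivity} of $\kappa_*:H^\hdot(\overline{\OO}_\lambda)\to\IH^\hdot(\overline{\OO}_\lambda)$. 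The paper proves this by a purity argument (Proposition~\ref{prop1}): pointwise purity of $IC(\overline{\OO}_\lambda)$ forces both the spectral sequence for $\C_{\overline{\OO}_\lambda}$ and that for $IC(\overline{\OO}_\lambda)$ to degenerate at $E_2$, and then injectivity of $\kappa_*$ on each stratum yields global injectivity. With $\kappa_*$ injective, your inclusion $\Ker i_\lambda^*\subseteq\ann[\g^e;v_\lambda]$ becomes an equality and the first isomorphism follows. The paper then gets the second isomorphism not from Brylinski but by rerunning the same argument in $\Tv$-equivariant cohomology specialized at $\rho$: there $\g^{\eta+\rho}=\g^{e+h}$ is conjugate to $\h$, and the comparison $\ggr H_\rho(\overline{\OO}_\lambda)\cong H^\hdot(\overline{\OO}_\lambda)$ produces $\ggr^F\C[\Specm V_\lambda]\cong\U\g^e/\ann[\g^e;v_\lambda]$.
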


\begin{remarks}\label{remarks} \vi
The  isomorphism on the left of \eqref{2iso}
 was communicated to me, as a conjecture, by Dale 
Peterson in the Summer of 1997.  This   isomorphism is induced by
the restriction map $H^\hdot(\gr) \to H^\hdot(\overline{\OO}_\lambda,
\C)$, using a natural graded algebra isomorphism 
$\dis H^\hdot(\gr) 
\simeq
\U\gde,$ proved in [G1, Proposition 1.7.2].
\smallskip

\vii
The isomorphism  on the right  of \eqref{2iso} is induced by the
Kostant-Brylinski  graded algebra isomorphism,
$\ggr^F \U\h\cong \U\gde,$ cf. ~\cite{Bry}.
In other
words,  the second isomorphism in \eqref{2iso} amounts to an equality
$\ggr^F I(V_\la)=\ann[\g^e;\, v_\la]$, of graded ideals in $\U\g^e$.
\smallskip

\viii The composite  isomorphism in  \eqref{2iso} 
 provides a
description of the cohomology of $\overline{\OO}_\lambda$
similar to the
description of the cohomology of  the  flag variety 
$G/B$, due to  Kostant. Such a description was furher extended to the case
of arbitrary Schubert varieties in $G/B$ by  Akyildiz and Carrell \cite{AC}.
It would be interesting to obtain a direct proof of our
isomorphisms using the techniques of  \cite{AC},
at least in the simply laced case.

\iv There is an
analogue of Theorem \ref{peterson} for an {\em arbitrary},
not necessarily spherical Schubert variety $X_\la\sset\gr$.
  Specifically,
in \cite{GK}, \S 4.3, the authors have associated
with any, not necessarily anti-dominant, weight $\la\in\h^*_{\Z}$,
a certain finite dimensional graded $\U\g^e$-module $F_\la$.
The module $F_\la$ was defined
in terms of quantum group cohomology, and it was
conjectured that the same module has an alternate
geometric interpretation in terms of
intersection cohomology of the Schubert variety $X_\la$.
That conjecture was later proved in \cite{ABG}, Theorem 10.2.3.
Using these results, it is straightforward to
extend our proof of Theorem \ref{peterson}
to obtain an algebra isomorphism of the form
$H^\hdot(X_\la)\cong \U\g^e/\ann[\g^e;\, F_\la]$.
\end{remarks}

\subsection{}
From Theorem \ref{peterson},
comparing dimensions of $\C[\Specm
V_\lambda]$ and of $V_\la$,
 we deduce 
\begin{corollary}\label{mult1} One has $V_\la=\U\gde\cdot v_\la$ if
and only if any weight occurs in $V_\la$ with multiplicity
$\leq 1$.\qed
\end{corollary}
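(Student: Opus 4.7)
The plan is to prove this purely by comparing dimensions, using Theorem \ref{peterson} as the key input. First observe that the inclusion $\U\g^e\cdot v_\la\subset V_\la$ is an equality if and only if these two finite-dimensional vector spaces have the same dimension. Since the $\U\g^e$-action on $v_\la$ factors through $\U\g^e/\ann[\g^e;\,v_\la]$ (by the first assertion of Theorem \ref{peterson}), and since the quotient map $\U\g^e/\ann[\g^e;\,v_\la]\to \U\g^e\cdot v_\la$, $u\mto u(v_\la)$, is by construction a linear isomorphism, we have
\[
\dim\bigl(\U\g^e\cdot v_\la\bigr)=\dim\bigl(\U\g^e/\ann[\g^e;\,v_\la]\bigr).
\]

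Next, I would invoke the chain of isomorphisms \eqref{2iso} to rewrite the right-hand side. Passing to the associated graded does not change dimensions, so
\[
\dim\bigl(\U\g^e/\ann[\g^e;\,v_\la]\bigr)=\dim\,\ggr^F\C[\Specm V_\la]=\dim\C[\Specm V_\la].
\]
Recall that by the conventions of \S\ref{notat}, $\Specm V_\la$ is regarded as a \emph{reduced} finite subscheme of $\h^*$, so its coordinate ring has dimension equal to its cardinality, i.e.\ to the number of distinct weights of $V_\la$.

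Finally, one combines this with the trivial identity $\dim V_\la=\sum_{\mu\in\Specm V_\la}\dim V_\la(\mu)$. Every summand is $\geq 1$, so $\dim V_\la\geq|\Specm V_\la|$ with equality precisely when $\dim V_\la(\mu)\leq 1$ (equivalently, $=1$) for every $\mu\in\Specm V_\la$. Putting the chain together, $V_\la=\U\g^e\cdot v_\la$ is equivalent to $\dim V_\la=|\Specm V_\la|$, which is equivalent to every weight of $V_\la$ occurring with multiplicity at most one. There is essentially no obstacle here: the corollary is purely a dimension count, and all real content has been absorbed into Theorem \ref{peterson}.
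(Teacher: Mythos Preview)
Your proof is correct and is exactly the dimension comparison the paper has in mind: the one-line deduction ``comparing dimensions of $\C[\Specm V_\lambda]$ and of $V_\la$'' after Theorem~\ref{peterson} unpacks precisely into your argument. One tiny remark: the fact that the action of $\U\g^e$ on the single vector $v_\la$ factors through $\U\g^e/\ann[\g^e;\,v_\la]$ is automatic from the definition of the annihilator and does not require invoking Theorem~\ref{peterson}; that theorem is only needed for the isomorphism \eqref{2iso}.
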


We remark that there are very few representations, besides the minuscule
ones,
with all
weights of multiplicity at most one. For simple groups, these are: $\sym^n(\C^n)$
for $G=SL(n)$; the standard $2n+1$-dimensional  representation for 
 $G=SO(2n+1)$, and a
7-dimensional  representation for a group $G$ of type $G_2$.

Let $Q(\g,\h)\sset \h^*_{_\Z}$ be the root lattice.
In the special case where  $\la\in Q(\g,\h),$
Corollary \ref{mult1} is equivalent to 
the following unpublished result of B. Kostant.

\begin{proposition}\label{KKK} Let  $\lambda\in Q(\g,\h)$ be a anti-dominant
weight. 
Then,  $V_\la=\U\gde\cdot v_\la$ 
holds if and only if $\dim V_\la(0)=1.$ 
\end{proposition}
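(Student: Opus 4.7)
The plan is to deduce Proposition \ref{KKK} from Corollary \ref{mult1}. That corollary already equates $V_\la=\U\g^e\cd v_\la$ with the condition that every weight of $V_\la$ has multiplicity $\leq 1$, so, under the standing hypothesis $\la\in Q(\g,\h)$, what remains is to establish the equivalence
\[
\text{every weight of $V_\la$ has multiplicity $\leq 1$}\ \Longleftrightarrow\ \dim V_\la(0)=1.
\]

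First I would record that $0\in\Specm V_\la$, i.e. $\dim V_\la(0)\geq 1$, whenever $\la\in Q(\g,\h)$. The weights of $V_\la$ lie in $\la+Q(\g,\h)=Q(\g,\h)$, so the integrality condition is met at $\mu=0$; moreover $\sum_{w\in W}w\la$ is a $W$-invariant element of the reflection representation $\h^*$ and therefore vanishes, exhibiting $0=\tfrac{1}{|W|}\sum_{w\in W}w\la$ as a point of the convex hull of the Weyl orbit $W\la$. By the classical description of $\Specm V_\la$ as the set of integral points in this convex hull, $0\in\Specm V_\la$. Consequently the implication $(\Rightarrow)$ of the displayed equivalence is immediate: all multiplicities $\leq 1$ combined with $\dim V_\la(0)\geq 1$ force $\dim V_\la(0)=1$.

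For the converse $(\Leftarrow)$ the essential ingredient is the classical monotonicity of weight multiplicities for complex semisimple Lie algebras: whenever $0\in\Specm V_\la$, one has $\dim V_\la(\mu)\leq\dim V_\la(0)$ for every weight $\mu$ of $V_\la$. This is proved by combining the $W$-invariance of multiplicities with the palindromic unimodality of the multiplicity function along every $\sll$-string --- for a dominant $\mu\neq 0$ one chooses a simple root $\alpha_i$ with $\langle\mu,\alpha_i^\vee\rangle>0$, observes that the $\sll(\alpha_i)$-string through $\mu$ yields $\dim V_\la(\mu-\alpha_i)\geq\dim V_\la(\mu)$, and iterates (reflecting back into the dominant chamber when needed) until one reaches $0$. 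This monotonicity is entirely standard, and I would cite it from a textbook treatment rather than reproduce the (elementary but slightly awkward) induction. Granting the bound, $\dim V_\la(0)=1$ forces every weight multiplicity of $V_\la$ to be $\leq 1$, and Corollary \ref{mult1} then delivers $V_\la=\U\g^e\cd v_\la$. The main obstacle in a fully self-contained proof would be precisely the monotonicity step; everything else is a short bookkeeping argument.
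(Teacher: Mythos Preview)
Your argument is correct: the monotonicity of weight multiplicities you invoke (that $\dim V_\la(\mu)\leq\dim V_\la(0)$ whenever $0$ is a weight of $V_\la$) is a standard fact, provable exactly as you sketch via $\sll$-strings and induction along the dominance order on the dominant chamber. Once Corollary~\ref{mult1} is available, your reduction goes through cleanly; indeed, the paper itself remarks just before the proposition that, for $\la\in Q(\g,\h)$, Corollary~\ref{mult1} and Proposition~\ref{KKK} are equivalent, and you are simply supplying that equivalence.

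The paper, however, takes a genuinely different route. It reproduces Kostant's own direct argument (see the footnote), which makes no appeal to Corollary~\ref{mult1} and hence none to Theorem~\ref{peterson} or the geometric Satake machinery. One fixes an $h$-stable complement $M$ to $\g^e(V_\la)$ inside $V_\la$ and proves two things: first, $V_\la=\U\g^e\cdot M$, by a minimal-$h$-eigenvalue argument exploiting that $\ad h$ acts on $\g^e$ with strictly positive weights; second, $\dim M=\dim V_\la(0)$, by dualizing and using the classical identity $\dim(V_\la^*)^{\g^e}=\dim V_\la^*(0)$. When $\dim V_\la(0)=1$ one checks $v_\la\notin\g^e(V_\la)$, so $M=\C v_\la$ works and the conclusion follows. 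What your approach buys is brevity given the heavy machinery already in place; what the paper's approach buys is logical independence from that machinery, together with the intermediate identities~\eqref{UE}, which hold for every $\la\in Q(\g,\h)$ and are of interest in their own right.
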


\begin{proof}\footnote{We are indebted to B. Kostant for kindly communicating to us
both the statement and the main idea of the short direct prooof of this
result reproduced below.}
Fix an arbitrary  anti-dominant weight  $\lambda$ contained in the
root lattice of $\gd$ and  an $h$-stable direct sum decomposition
$V_\la= \gde(V_\la)\oplus M$.
We claim that
\beq{UE}
V_\la=\U\gde\cd M \qquad\text{and}\qquad\dim M=\dim V_\la(0).
\eeq

To prove the first equation, assume $v\in V_\la$ is a nonzero vector that does not belong to
$\U\gde\cd M.$ We may assume without loss of generality that $v$ is
an $h$-eigenvector with an integral eigenvalue $c$. Furthermore, let $v$
be such that
$c$ is the minimal possible among those  arising in this way for
various $h$-eigenvectors $v$ as above.

Now, since $V_\la= \gde(V_\la)\oplus M$, we have $v=\sum_j a_j\cdot v_j +m$,
for some  $\ad h$-weight vectors $a_j\in \gde$  and some 
$h$-eigenvectors $m\in M,\,v_j\in V_\la,$ such that $v_j\not\in\U\gde\cd
M$
for at least one $j$.
Write $c_j\in\Z$ for
the $h$-eigenvalue of the vector $v_j$.
The weights of the $\ad
h$-action
on $\gde$ being positive integers (equal to twice the exponents of $\gde$), we deduce that
$c_j<c$. Hence, the minimality of $c$ forces
 $v_j=0$ for all $j$. Thus, $v=m\in M$.
This is a contradiction, and the first equation in  \eqref{UE} follows.

To prove the second equation,
let $V_\la^*$ denote the contragredient representation.
By a result of Kostant, cf. Theorem \ref{key}(ii) below,
one has $\dim (V_\la^*)^{\gde}=\dim (V_\la^*)(0)=\dim V_\la(0).$
Furthermore, the annihilator of  the
subspace $(V_\la^*)^{\gde}$ with respect to the canonical perfect
pairing
$V_\la\times V_\la^*\to \C$ equals $\gde(V_\la)\sset V_\la$.
Thus, we deduce $\dis
\dim \gde(V_\la)+\dim V_\la(0)=\dim V_\lambda,$
and the  second equation in  \eqref{UE} follows.

Let finally  $\dim V_\la(0)=1$. Observe that  $v_\la\not\in\gde(V_\la)$
so, by the  dimension formula in
\eqref{UE},
the space  $M=\C\cdot v_\la$ provides, in this case, a direct complement to $\g^e(V_\la)$.
Then, the first equation  in  \eqref{UE} 
yields  $V_\la=\U\gde\cdot v_\la$.
This proves one implication; the proof of the opposite implication is
similar
and is left to the reader.
\end{proof}

We note that  the dimension equality in \eqref{UE} clearly holds in the setting 
of Corollory \ref{mult1}  as well.
However, the proof of Proposition \ref{KKK} does not go through since
the equation $\dim V_\la^e=\dim V_\la(0)$ fails unless
$\la\in Q(\g,\h)$.

\subsection{} We will use simplified notation
$\sym\h^W$ for the algebra $(\sym\h)^W=\C[\h^*]^W
\cong\C[\Lie\Tv]^W.$
Let $\m_o:=\sym\h^W_+$ denote the augmentation ideal of  $\sym\h^W$.
Given  a weight $\la$,  write $W_\la\sset W$ for the
isotropy group of  $\la$, and let  $P_\la\sset\Gv$ be the corresponding parabolic
subgroup.
 Thus, one has graded algebra imbeddings
$\sym\h^W\into  \sym\h^{W_\la}\into \sym\h.$

Further, view $\la$ as a $\Tv$-fixed point in the loop Grassmannian.
The  corresponding $\Gv$-orbit, 
$\Gv\cdot\la\cong \Gv/P_\la$, is  a partial flag manifold.
In this way, we get a chain of imbeddings
\beq{u2v2} 
\xymatrix{
\Gv/P_\la=\Gv\cd\la\en\ar@{^{(}->}[rr]^<>(0.5){\imath_\la}&&
 \en\Gv[[z]]\cd\la=\OO_\la\en\ar@{^{(}->}[rr]^<>(0.5){\jmath_\la}&&
\en\overline{\OO}_\la,
}
\eeq
where the map $\imath_\la$ is a {\em homotopy equivalence}
and the  map $\jmath_\la$ is an open imbedding.
Therefore, the composite map in \eqref{u2v2} 
induces, via  Theorem \ref{peterson},  graded algebra 
morphisms
\beq{uuvv}
\xymatrix{
\U\gde/\ann[\g^e;\, v_\la]=H^\hdot(\overline{\OO}_\la)
\ar[rr]^<>(0.5){(\jmath_\la\ccirc \imath_\la)^*}&&
 H^\hdot(\Gv/P_\la)=
\sym\h^{W_\la}/\m_o\cd\sym\h^{W_\la},
}
\eeq
where the last equality 
is  the  Borel isomorphism.

\begin{proposition} \label{peterson2}
For any anti-dominant weight $\la$,
 the map \eqref{uuvv} is surjective. Moreover,  it may be factored naturally as a composite 
\beq{twomaps}
\xymatrix{
\U\gde/\ann[\g^e;\, v_\la]\
\ar@{^{(}->}[rr]^<>(0.5){u\mapsto u(v_\la)}&&
\ V_\la\ \ar@{->>}[rr]^<>(0.5){f_o} &&\
\sym\h^{W_\la}/\m_o\cd\sym\h^{W_\la}.
}
\eeq 
\end{proposition}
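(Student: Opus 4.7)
The left-hand arrow in \eqref{twomaps} is tautological: by the definition \eqref{eqnot}, $\ann[\g^e;\,v_\la]$ is precisely the kernel of the evaluation map $\U\g^e\to V_\la,\ u\mapsto u(v_\la)$, and so this map descends to an injection $\U\g^e/\ann[\g^e;\,v_\la]\into V_\la$ whose image is the cyclic submodule $\U\g^e\cd v_\la$.

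To construct $f_o$ I would appeal to the geometric Satake identification $V_\la\cong\IH^\hdot(\overline{\OO}_\la)$ that already underlies the proof of Theorem \ref{peterson}. Since $\OO_\la\sset\overline{\OO}_\la$ is the smooth open stratum, the IC-sheaf restricts there to a (shifted) constant sheaf, so $\IH^\hdot(\OO_\la)=H^\hdot(\OO_\la)$; the homotopy equivalence $\imath_\la$ then identifies this with $H^\hdot(\Gv/P_\la)=\sym\h^{W_\la}/\m_o\cd\sym\h^{W_\la}$. Define
\[
f_o:\ V_\la\cong\IH^\hdot(\overline{\OO}_\la)\xrightarrow{\jmath_\la^*}H^\hdot(\OO_\la)\xrightarrow{\imath_\la^*}H^\hdot(\Gv/P_\la).
\]
For commutativity of \eqref{twomaps}, one uses two compatibilities built into Satake: the $\U\g^e\cong H^\hdot(\gr)$-action on $\IH^\hdot(\overline{\OO}_\la)$ is cup product with the pullback of classes from $\gr$, and the vector $v_\la$ corresponds to the canonical $\IH$-generator at the $\Tv$-fixed point $\la$. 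It follows that $u(v_\la)$ is identified with $u|_{\overline{\OO}_\la}$, and applying $f_o$ yields $(\jmath_\la\ccirc\imath_\la)^* u$, which is precisely \eqref{uuvv}.

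The main obstacle, and the real content of the proposition, is the surjectivity of \eqref{uuvv}. After the above identifications this amounts to surjectivity of the ordinary pullback $H^\hdot(\overline{\OO}_\la)\to H^\hdot(\Gv/P_\la)$ along $\Gv/P_\la\into\overline{\OO}_\la$; combined with the surjection $H^\hdot(\gr)\onto H^\hdot(\overline{\OO}_\la)$ of Remark \ref{remarks}(i), this reduces to the classical surjectivity of $H^\hdot(\gr)\to H^\hdot(\Gv/P_\la)$. The natural route here is equivariant: both spaces are $\Tv$-equivariantly formal, every $\Tv$-fixed point of $\Gv/P_\la$ (namely each element of the extreme-weight orbit $W\cd\la$) is also $\Tv$-fixed in $\gr$, and equivariant localization together with the presentation $H^\hdot_{\Tv}(\Gv/P_\la)\cong\sym\h^{W_\la}\otimes_{\sym\h^W}\sym\h$ should deliver the required surjection upon reducing modulo the augmentation ideal $\m_o$.
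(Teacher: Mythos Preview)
Your construction of $f_o$ and the factorization through $V_\la\cong\IH^\hdot(\overline{\OO}_\la)$ is exactly what the paper does: restrict the IC-complex to the open stratum $\OO_\la$, where it becomes the constant sheaf, and then use the homotopy equivalence $\imath_\la$ and the Borel isomorphism. The identification of $v_\la$ with the image $\kappa_*(1)\in\IH^\hdot(\overline{\OO}_\la)$ of the unit under \eqref{kappa} is what makes the diagram commute, and this is precisely how the paper (implicitly, via Lemma \ref{lem}) treats the first arrow.

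Where you diverge from the paper is the surjectivity argument, and here your proposal is both more elaborate and not quite complete. The paper does not pass through $H^\hdot(\gr)$ or invoke equivariant localization at all; it observes in one line that $\overline{\OO}_\la$ and $\OO_\la$ carry \emph{compatible} partitions into Iwahori orbits, all of even real dimension, so the restriction $H^\hdot(\overline{\OO}_\la)\to H^\hdot(\OO_\la)$ is automatically surjective (the long exact sequence of the pair collapses by parity). Your detour through $H^\hdot(\gr)\to H^\hdot(\Gv/P_\la)$ is logically sound as a reduction, but the terminal step---``equivariant localization together with the presentation $H^\hdot_{\Tv}(\Gv/P_\la)\cong\sym\h^{W_\la}\otimes_{\sym\h^W}\sym\h$ should deliver the required surjection''---is not an argument, only a hope. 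Localization gives \emph{injections} into fixed-point cohomology; extracting surjectivity of the restriction map from this requires an additional ingredient (e.g.\ identifying generators of $H^\hdot_{\Gv}(\Gv/P_\la)=\sym\h^{W_\la}$ with equivariant characteristic classes that visibly extend to $\gr$, or a full GKM computation). Ironically, the quickest way to finish your own argument is the same Iwahori-cell parity observation, applied now to the closed embedding $\Gv/P_\la\hookrightarrow\gr$---at which point the detour through $\gr$ becomes redundant.
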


Friedman  and Morgan \cite{FM}, have introduced
a certain linear map
$f: V_\la\to \sym\h^{W_\la},$ 
 and  conjectured
that the composite
$\text{proj}\ccirc f:\
V_\la\to\sym\h^{W_\la}\onto\sym\h^{W_\la}/\m_o\cdot\sym\h^{W_\la},$
is a surjective map. It is not difficult to verify, going through the constructions
of the maps in question, that one actually has $f_o=\text{proj}\ccirc f.$
Thus, our proof of Proposition \ref{peterson2}, to be given in
 \S\ref{proof2},  yields at the same time
a proof of (a  strengthening of) the Friedman-Morgan conjecture.

\subsection{} We recall  that the orbit $\OO_\mu$ is closed in $\gr$ if and only if
the weight $\mu$ is either {\em minuscule} or zero. 
In such a case, each of the imbeddings
in \eqref{u2v2} is  an equality and the restriction map
$(\imath_\mu\ccirc\jmath_\mu)^*,$ in \eqref{uuvv}, becomes an isomorphism.

\begin{theorem}\label{key} Let  $\mu$ be either a minuscule
anti-dominant weight
or zero. Then,\vskip 2pt

\vi Each of the two maps
in \eqref{twomaps} is a bijection, and the composite
yields
a graded algebra isomorphism
$\dis \U\gde/\ann[\g^e;\, v_\mu]\cong\sym\h^{W_\mu}/\m_o\cd\sym\h^{W_\mu}.$
\vskip 2pt

\vii For any finite dimensional $\g$-module $V$ such that
$\Specm V\sset\,
\mu+Q(\g,\h),$  we have 
$$\dim V^{\ann[\g^e;\, v_\mu]}/|W|
=\dim V(\mu)/|W_\mu|. $$
\end{theorem}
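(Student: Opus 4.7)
The plan is to handle part (i) directly from the closed-orbit hypothesis, and then to deduce part (ii) by rewriting $V^{\ann[\g^e;\, v_\mu]}$ as a space of $\g^e$-invariants in a tensor product and applying the classical Kostant identity $\dim U^{\g^e}=\dim U(0)$, valid whenever $\Specm U\sset Q(\g,\h)$.

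For part (i), the hypothesis that $\mu$ is minuscule or zero ensures that the $\Gv[[z]]$-orbit $\OO_\mu$ is closed in $\gr$, so $\jmath_\mu$ in \eqref{u2v2} is an equality and $\imath_\mu$ is a homotopy equivalence; consequently, the restriction map in \eqref{uuvv} is a graded algebra isomorphism. For such $\mu$, every weight of $V_\mu$ lies in the single Weyl orbit $W\mu$ with multiplicity one, so Corollary \ref{mult1} furnishes the cyclicity $V_\mu = \U\g^e\cd v_\mu$; this makes the first map in \eqref{twomaps} (automatically injective by the definition of the annihilator) a bijection. Since the composite in \eqref{twomaps} agrees with the restriction isomorphism by Proposition \ref{peterson2}, the second map $f_o$ is then also a bijection, and the composite is a graded algebra isomorphism.

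For part (ii), the cyclic presentation $V_\mu\cong \U\g^e/\ann[\g^e;\, v_\mu]$ supplied by part (i) yields, for any finite-dimensional $\g$-module $V$, the identifications
$$V^{\ann[\g^e;\, v_\mu]}\;=\;\Hom_{\U\g^e}(V_\mu,V)\;=\;(V_\mu^*\o V)^{\g^e}.$$
The weights of $V_\mu^*$ lie in $-W\mu\sset -\mu + Q(\g,\h)$ (since $W$ acts trivially on $\h^*_{\Z}/Q(\g,\h)$), and the weights of $V$ lie in $\mu+Q(\g,\h)$ by hypothesis; summing, the weights of $V_\mu^*\o V$ lie in $Q(\g,\h)$. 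Kostant's classical identity then gives $\dim(V_\mu^*\o V)^{\g^e}=\dim(V_\mu^*\o V)(0)$, and the zero-weight space decomposes as
$$\dim(V_\mu^*\o V)(0)\;=\;\sum_\beta \dim V_\mu(\beta)\cd\dim V(\beta)\;=\;\sum_{\beta\in W\mu}\dim V(\beta)\;=\;\frac{|W|}{|W_\mu|}\cd\dim V(\mu),$$
using that $V_\mu$ has weights precisely on $W\mu$ with multiplicity one, and that the weight multiplicities of $V$ are $W$-invariant. Combining the two displays yields the formula of part (ii).

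The main obstacle is really the classical Kostant identity for modules with weights in the root lattice, which I take as an input; granted that, the remaining argument is formal, with part (i) supplying the essential cyclicity that allows $V^{\ann[\g^e;\, v_\mu]}$ to be rewritten as $\g^e$-invariants in $V_\mu^*\o V$.
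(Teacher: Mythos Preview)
Your argument is correct. Part (i) matches the paper's own treatment: closedness of $\OO_\mu$ makes \eqref{uuvv} an isomorphism, Corollary \ref{mult1} gives cyclicity, and the factorization \eqref{twomaps} forces both maps to be bijections.

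For part (ii), your route is genuinely different from the paper's and more elementary. The paper proceeds geometrically: it identifies $V^{\ann[\g^{\eta+\la};\,v_\mu]}$ with the specialized equivariant group $H_\la(j_\mu^!\MM)$ via $\Ext_\la(IC_\mu,\MM)\cong\Hom_{\g^{\eta+\la}}(V_\mu,V)$ (using a result from \cite{G1}), then invokes pointwise purity of $j_\mu^!\MM$ to compare the fibers at $\la=o$ and $\la=\rho$; at $\rho$ the fixed point decomposition \eqref{fp_decomposition} yields the count $\dim V(\mu)\cdot|W/W_\mu|$. You instead rewrite $V^{\ann[\g^e;\,v_\mu]}$ as $(V_\mu^*\o V)^{\g^e}$ and apply Kostant's classical identity $\dim U^{\g^e}=\dim U(0)$ to $U=V_\mu^*\o V$, which has root-lattice weights; the zero-weight count is then a one-line computation. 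What your approach gains is directness and independence from the equivariant $\Ext$/purity machinery; what the paper's approach buys is a uniform geometric proof covering all minuscule $\mu$ at once, including $\mu=0$, so that Kostant's identity is reproved rather than taken as input.
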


The bijectivity of the first map
in \eqref{twomaps} for a  minuscule weight $\mu$
is immediate from Corollary \ref{mult1}, see also
 \cite{G1}, Proposition 1.9,  Proposition 1.8.1.
The bijectivity of  the second map $f_o$ will be proved in \S\ref{pf_key}.
It 
has also been established  earlier by Friedman and Morgan~\cite{FM},
in a totally different way.
\begin{remarks} (1)
For $\mu=0$, the statement of Theorem \ref{key}(ii) is  a classic
result of Kostant \cite{Ko1}, cf. also Theorem \ref{orig} below. Note that
this special case of Theorem \ref{key}(ii) immediately implies, by a
standard
semicontinuity argument, another important result of Kostant
saying that {\em the function $x\mto\dim V^{\g^x}$
is constant on the set of regular elements of $\g$, whenever}
$\Specm V\sset Q(\g,\h)$.

(2)  An alternative much more elementary proof of
the graded space isomorphism from  Theorem \ref{key}(i) 
is due to B. Gross  \cite{Gr}, Proposition 6.4. The technique
of {\em loc. cit.} was insufficient, however, for establishing
the isomorphism of {\em algebras}.
\end{remarks}

\vskip 4mm
\noindent
{\sc Acknowledgements.}   {\footnotesize I am grateful to Dale Peterson and 
Bert Kostant for interesting discussions, and also
to  Mark Reeder for several very helpful comments on an earlier draft of the paper,
 and to Braham Broer  for bringing reference
 \cite{Bro} to my attention.}

\section{A geometric result}  \label{main}

Our proof of  Theorem \ref{peterson} is based on a simple general
 result concerning intersection cohomology  which we are going to state
 and prove below. Write $\otimes=\otimes_{_\C}.$ 

\subsection{}
Let $Y$ be a (not necessarily smooth) complex  algebraic variety,
and $D^b(Y)$ the bounded derived category of constructible complexes
of $\C$-sheaves on $Y$.
Let $\C_Y\in D^b(Y)$ be the constant sheaf,
resp.  $IC(Y)\in D^b(Y)$ be  the
intersection cohomology complex on $Y$ i.e.,  the 
`intermediate extension' of the constant sheaf
on a Zariski open smooth subset $Y_0\sset Y$.
In this paper, we will use the `naive' normalization in which the restriction
of $IC(Y)$ to $Y_0$ is a constant sheaf concentrated in degree 0 (not in degree
$-\ddim Y $ as in \cite{BBD}). Thus non-zero cohomology sheaves, ${\cal H}^iIC(Y)$,
may occur only in degrees $0\leq i\leq \ddim Y$.
We
write  $\IH^\hdot(Y)=H^\hdot(Y,
IC(Y))$,
for the intersection cohomology groups of $Y$ with
complex coefficients.

We assume $Y$ is irreducible. Then, in degree zero, we have
$\dis\C=H^0(Y)=\IH^0(Y)=\Hom_{D^b(Y)}(\C_Y,IC(Y))$.
Therefore, there exists a nonzero morphism
$\kappa: \C_Y\to IC(Y)$, unique up to a constant factor.
This gives the induced map of (hyper-) cohomology,
\begin{equation}\label{kappa}
\kappa_*:\; H^\hdot(Y) = H^\hdot(Y,\,\C_{Y})\, 
\longrightarrow\, H^\hdot(Y,\,IC(Y))=\IH^\hdot(Y)\,.
\end{equation}

\subsection{}
Let $Y$ be an irreducible 
projective variety with an algebraic
stratification $Y = \sqcup_{s\in \WW}\, Y_s.$
The strata $Y_s$
are assumed to be smooth locally closed subvarieties labelled by the elements
of a finite set $\WW$. 

We will use the standard yoga of {\em weights} introduced by
Deligne \cite{De}, cf. also \cite{BBD}.
 Recall that a mixed complex $A$ on $Y$ 
is said to be {\em pointwise pure} if $i^*_yA$ is pure
for any $y\in Y$, where
$i_y: \{y\}\into Y$ denotes the
imbedding.

Our
geometric result about intersection cohomology reads

\begin{proposition}  \label{prop1}
Assume the following two conditions hold:

\pb{The complex $IC(Y)$ is pointwise pure;}

\pb{Each stratum $Y_s$ is isomorphic to an affine (e.g. vector) bundle over
a smooth con-nected, and simply-connected projective variety.}
\vskip 3pt

\noindent
Then,  the map
$\kappa_*$ in \eqref{kappa} is injective,  equivalently,
the dual map
$\IH^\hdot(Y) \rightarrow H_\idot(Y)$, to homology,
is surjective.
\end{proposition}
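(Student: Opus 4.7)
The plan is to analyze the spectral sequence attached to the stratification $Y = \sqcup_{s\in\WW} Y_s$, applied to both complexes $\C_Y$ and $IC(Y)$ simultaneously, and to show that $\kappa_*$ is a termwise split injection at the $E_1$-page, with both spectral sequences degenerating there. The two formulations of the conclusion are equivalent by Poincar\'e--Lefschetz duality combined with Verdier self-duality of $IC(Y)$, so I will concentrate on proving injectivity of $\kappa_*$.

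For any constructible complex $\mathcal{F}$ on $Y$, the filtration by the closed subsets $F_p := \bigsqcup_{\dim Y_s \le p} Y_s$ yields a standard spectral sequence
\[
E_1^{p,q}(\mathcal{F}) \;=\; \bigoplus_{\dim Y_s=p} H^{p+q}_c(Y_s,\,\mathcal{F}|_{Y_s}) \;\Longrightarrow\; H^{p+q}(Y,\,\mathcal{F}),
\]
in which compact-support cohomology equals ordinary cohomology thanks to the projectivity of $Y$. The morphism $\kappa:\C_Y \to IC(Y)$ induces a morphism between these two spectral sequences, and my goal reduces to analyzing it at the level of $E_1$.

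For the stratum-by-stratum analysis, hypothesis (b) makes each $Y_s$ smooth with $\pi_1(Y_s) = 1$, since an affine bundle over a simply-connected smooth projective base inherits simple connectivity; hence every constructible local system on $Y_s$ is trivial and each sheaf $\mathcal{H}^i(IC(Y)|_{Y_s})$ is constant. Combined with the pointwise purity from (a), a standard formality argument yields a splitting
\[
IC(Y)|_{Y_s} \;\simeq\; \bigoplus\nolimits_i\, \C_{Y_s}[-i]^{\oplus n_i^s}.
\]
The restriction $\kappa|_{Y_s}$ factors as $\C_{Y_s} \to \mathcal{H}^0(IC(Y))|_{Y_s} \hookrightarrow IC(Y)|_{Y_s}$, realizing $\C_{Y_s}$ as the $i=0$ summand; therefore $\kappa_*$ is a split inclusion on every $E_1^{p,q}$.

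Hypothesis (b) also implies that each $H^\hdot_c(Y_s)$ is pure, via the Thom isomorphism for the affine bundle together with purity of the smooth projective base $B_s$, so all $E_1$-entries of both spectral sequences are pure of weight matching total degree. Consequently both abutments $H^\hdot(Y)$ and $\IH^\hdot(Y)$ are pure, and a standard Deligne-type weight argument forces all differentials $d_r$, $r \ge 1$, to vanish in each spectral sequence. Termwise injectivity on the common page $E_1 = E_\infty$ therefore lifts to injectivity of $\kappa_*$ on the abutment, as desired. The main technical obstacle is justifying the formality splitting of $IC(Y)|_{Y_s}$ and identifying the composite $\C_{Y_s} \to IC(Y)|_{Y_s}$ with the inclusion of the $i=0$ summand; this requires careful bookkeeping of normalizations together with control of $\mathcal{H}^0(IC(Y))$, which in the Schubert-variety application one verifies through normality of $\overline{\OO}_\lambda$.
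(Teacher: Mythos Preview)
Your argument is essentially the same as the paper's: filter by the stratification, use pointwise purity plus the affine-bundle hypothesis to see that both spectral sequences have pure $E_1$-terms and hence degenerate, and check that $\kappa$ induces an injection termwise at $E_1$. The paper phrases the termwise step slightly more economically---rather than splitting $IC(Y)|_{Y_s}$ as a complex, it works directly with the cohomology sheaves and the map $\kappa|_{Y_s}\colon \C_{Y_s}\hookrightarrow \mathcal{H}^0 j_s^*IC(Y)$---but the content is the same.

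One correction: your closing remark that controlling $\mathcal{H}^0 IC(Y)$ ``in the Schubert-variety application one verifies through normality of $\overline{\OO}_\lambda$'' is unnecessary, and the paper does not invoke it. All that is needed is that the sheaf map $\C_Y\to\mathcal{H}^0 IC(Y)$ is injective on stalks, and this follows from irreducibility of $Y$ alone: for any connected open $U\ni y$ the composite $\Gamma(U,\C_U)\to\Gamma(U,\mathcal{H}^0 IC(Y))\to\Gamma(U\cap Y_0,\C)$ is nonzero because $U\cap Y_0\neq\varnothing$. So even if $n_0^s>1$ on some stratum, $\C_{Y_s}$ still embeds as a direct summand of the constant sheaf $\mathcal{H}^0 IC(Y)|_{Y_s}$, and the argument goes through without any normality hypothesis.
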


\begin{proof} For each $s\in \WW$,  write $\YY_s$ for the closure of $Y_s$
and let $j_s:
Y_s \hookrightarrow Y$, resp.
$j'_s:
Y_s \hookrightarrow \YY_s,$ denote the natural inclusion.
Then,
for any $A\in D^b(Y),$ there is a standard convergent spectral sequence 
of the form
\beq{E2} E^{p,q}_2(A)= \bigoplus_{\dim Y_s = -p}\, H^{p+q}\big(\YY_s,\ (j'_s)_!j^*_sA\big)\en
\Rightarrow\en E^{p,q}_\infty(A)=\ggr_pH^{p+q}(Y, A)\,,
\eeq
where in the last equality on the right we have used
that  $H^\hdot_c(Y, A)=H^\hdot(Y, A)$ since $Y$ is compact.

First, let $A=\C_Y$. Then, $j^*_sA=\C_{Y_s}$, hence, we have
$H^n\big(\YY_s,\ (j'_s)_!j^*_sA\big)= H^n_c(Y_s),$ the
cohomology of $Y_s$ with compact support. By the second assumption of
the proposition, 
the cohomology $H^\hdot_c(Y_s)$ is pure. It follows that, for  $A=\C_Y$, the corresponding spectral
sequence \eqref{E2}
degenerates at the $E_2$-term, by a standard argument due to Deligne. Thus, we obtain
$E_2(\C_Y)=E_\infty(\C_Y)=\ggr H^\hdot(Y).$

Next, we let $A=IC(Y)$. Then, each cohomology sheaf ${\mathcal H}^pj^*_sA$
is a locally constant, hence constant, 
sheaf on $Y_s,$ for any $s\in \WW$. Furthermore, the pointwise purity
assumption on $IC(Y)$ insures that each of the local systems  ${\mathcal
H}^pj^*_sIC(Y)$
is pure. Hence, the spectral
sequence \eqref{E2}  for  $A=IC(Y)$ degenerates 
at  the $E_2$-term again.  Thus, we obtain
$E_2(IC(Y))=E_\infty(IC(Y))=\ggr \IH^\hdot(Y).$

Now, the canonical sheaf
 morphism $\kappa: \C_Y\to IC(Y)$ induces a morphism
of spectral sequences associated with $\C_Y$ and with  $IC(Y)$,
respectively. For the corresponding $E_2$-terms, the  resulting map 
$E^{p,q}_2(\C_Y)\to 
E_2^{p,q}(IC(Y))$
splits into a direct sum of the form
\beq{EE}\bigoplus_{\dim Y_s = -p}\,\left[H^{p+q}_c(Y_s,\C_{Y_s})
\  \stackrel{\kappa_s}\too \  H^{p+q}_c\big(Y_s,\
{\mathcal
H}^\hdot j^*_sIC(Y)\big)\right].
\eeq

Here,  $\kappa_s$ is a map induced by the  morphism of sheaves
$\kappa|_{Y_s}:\ \C_{Y_s}\to {\mathcal
H}^0 j^*_sIC(Y)$, the
restriction of the canonical 
 morphism $\kappa: \C_Y\to {\mathcal H}^0IC(Y)$ to $Y_s$.
Since $\kappa$ is clearly injective,
the restriction of  $\kappa$ to $Y_s$ is injective as well.
It follows that each of the maps  $\kappa_s$, hence
the map \eqref{EE} itself, is injective.

Since both spectral sequences degenerate at
the corresponding $E_2$-terms, we conclude
that we have proved injectivity of the map
$$ E_2(\C_Y)=\ggr H^\hdot(Y,\C_Y)\too
E_2(IC(Y))=\ggr \IH^\hdot(Y).$$
The latter map is nothing but the map $\ggr\kappa_*$, the associated graded map for the
morphism \eqref{kappa}.
Thus, the injectivity of  $\ggr\kappa_*$ implies
injectivity of the map $\kappa_*$ itself.
\end{proof}

\subsection{Equivariant setting.}\label{equiv_sec}
Applications of Proposition \ref{prop1} often involve varieties with a
group
action.
Let  $\BG$ be  a complex connected reductive group and $pt:=$ point.

Given  a $\BG$-variety $Y$,
there is a standard functor
 $H^\hdot_\BG(Y,-)$, of  $\BG$-equivariant 
cohomology, see \cite{BL1}.
This functor assigns to a $\BG$-equivariant constructible complex $\L$, on
 $Y$, a graded $H^\hdot_\BG(pt)$-module  $H^\hdot_\BG(Y,\L)$, the  total $\BG$-equivariant 
(hyper-) cohomology group of $\L$.
In the special case of the constant sheaf $\C_Y$, we have
$H^\hdot_\BG(Y,\C_Y)= H^\hdot_\BG(Y)$,  the
 $\BG$-equivariant cohomology of the space $Y$. This is a graded
$H^\hdot_\BG(pt)$-algebra; moreover, it is   a finite rank free graded
$H^\hdot_\BG(pt)$-module provided $Y$ has a $\BG$-stable stratification
$Y=\sqcup_{s\in\WW}Y_s$, where each stratum
$Y_s$ is  (equivariantly) isomorphic to a $\BG$-equivariant
affine bundle on a smooth projective $\BG$-variety.

One can define $\BG$-equivariant intersection cohomology $\IH^\hdot_\BG(Y):=
H^\hdot_\BG(Y,\ IC(Y)),$ which is also known to be  a finite rank free graded
$H^\hdot_\BG(pt)$-module, cf. \cite{GKM}. Using this, it is
straightforward
to extend the proof of Proposisition \ref{prop1} to an
equivariant setting. Thus, one obtains the following result

\begin{corollary}\label{cor1} Let $Y=\bigsqcup_{s\in\WW} Y_s$ be a
stratification of a projective $\BG$-variety $Y$ such that the strata
$Y_s$ are $\BG$-stable and satisfy the conditions of Proposition
\ref{prop1}. Then, the natural  map
$H^\hdot_\BG(Y) \rightarrow \IH^\hdot_\BG(Y)$ is a {\em split}
imbedding of finite rank free $H^\hdot_\BG(pt)$-modules.\qed
\end{corollary}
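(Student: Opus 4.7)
The plan is to repeat the spectral-sequence argument of Proposition \ref{prop1} verbatim in the $\BG$-equivariant category (working in $D^b_\BG(Y)$) and then upgrade the resulting injection to a split imbedding by means of graded Nakayama, using the freeness statements that are part of the hypothesis.

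First I would establish the $\BG$-equivariant analog of the spectral sequence \eqref{E2}: for any $A\in D^b_\BG(Y)$, the $\BG$-stable stratification $Y=\bigsqcup_{s\in\WW}Y_s$ furnishes a convergent spectral sequence
$$E^{p,q}_2(A) = \bigoplus_{\dim Y_s=-p} H^{p+q}_\BG\big(\YY_s,\,(j'_s)_!j^*_sA\big)\en\Longrightarrow\en \ggr_p H^{p+q}_\BG(Y,A),$$
of graded $H^\hdot_\BG(pt)$-modules. Taking $A=\C_Y$, the hypothesis that each $Y_s$ is a $\BG$-equivariant affine bundle over a smooth, simply-connected projective $\BG$-variety forces purity of $H^\hdot_{\BG,c}(Y_s)$, hence degeneration of this spectral sequence at $E_2$; the abutment is thus $\ggr H^\hdot_\BG(Y)$. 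Taking $A=IC(Y)$, simple-connectedness of the strata implies each cohomology sheaf ${\cal H}^p j^*_sIC(Y)$ is a constant local system, and pointwise purity of $IC(Y)$ makes it pure, so again the sequence degenerates at $E_2$ and its abutment is $\ggr \IH^\hdot_\BG(Y)$.

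Next, the canonical morphism $\kappa:\C_Y\to IC(Y)$ induces a morphism of the two spectral sequences. On $E_2$ it decomposes into a direct sum, indexed by strata, of the maps
$$H^{p+q}_{\BG,c}(Y_s,\C_{Y_s})\too H^{p+q}_{\BG,c}\big(Y_s,\,{\cal H}^\hdot j^*_sIC(Y)\big),$$
each of which is injective because the sheaf-level restriction $\kappa|_{Y_s}$ is an injection of $\C_{Y_s}$ into ${\cal H}^0j^*_sIC(Y)$. Consequently $\ggr\kappa_*^{\BG}$, and therefore $\kappa_*^{\BG}$ itself, is injective.

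Finally, it remains to see this injection is split over $R:=H^\hdot_\BG(pt)$. By the stratification hypothesis and \cite{GKM}, both $H^\hdot_\BG(Y)$ and $\IH^\hdot_\BG(Y)$ are finite rank free graded $R$-modules; since $R$ is a polynomial algebra generated in positive degrees, an injection of finite free graded $R$-modules is split if and only if it remains injective after tensoring with $R/R_+=\C$ (by a standard graded Nakayama / minimal free resolution argument). The freeness of both modules causes the Leray spectral sequences of $Y_\BG\to B\BG$ (and its $IC$-variant) to degenerate, and this identifies the $\otimes_R \C$-reduction of $\kappa_*^{\BG}$ with the nonequivariant map $H^\hdot(Y)\to\IH^\hdot(Y)$, which is injective by Proposition \ref{prop1}. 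The split imbedding follows. The main obstacle I anticipate is the verification of pointwise purity and $E_2$-degeneration in the Bernstein--Lunts equivariant framework; once one grants that equivariant derived category supports a weight formalism compatible with Deligne's, everything else reduces to the formal spectral-sequence comparison and a clean application of graded Nakayama.
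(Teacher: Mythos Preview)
Your proposal is correct and follows exactly the route the paper indicates: the paper gives no proof beyond the remark that one extends the spectral-sequence argument of Proposition~\ref{prop1} to the $\BG$-equivariant derived category, which is precisely what you do. Your graded Nakayama argument for splitness (reducing modulo $R_+$ and invoking the nonequivariant Proposition~\ref{prop1}) spells out a detail the paper leaves entirely to the reader.
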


The space $H^\hdot_\BG(Y,\L)$
may be viewed as a graded $\C[\Lie\BG]^{\Ad\BG}$-module,
via the standard graded
algebra isomorphism $H^\hdot_\BG(pt)=\C[\Lie\BG]^{\Ad\BG}.$
For any {\em semisimple} element $t\in \Lie\BG$,
let $\m_t\sset \C[\Lie\BG]^{\Ad\BG}$ denote the maximal
ideal corresponding to  $\Ad\BG(t)$, a closed conjugacy class in
$\Lie\BG$.
The quotient
$H_t(Y,\L):=H^\hdot_\BG(Y,\L)/\m_t\cdot H^\hdot_\BG(Y,\L)$
will be referred to as a {\em specialized} equivariant cohomology
group of $\L$. The grading on $H^\hdot_\BG(Y,\L)$ gives rise
to a canonical ascending {\em filtration} $E_\idot H_t(Y,\L),$ on $H_t(Y,\L)$.

The augmentation ideal $\m_o\sset \C[\Lie\BG]^{\Ad\BG}$
 is a {\em
graded}
ideal. Hence, for  $t=o\in \Lie\BG,$
 the origin, the grading on $H^\hdot_\BG(Y,\L)$ descends to
a well defined  grading on $H_o(Y,\L)$. 
In the case where
$\L$ is, in addition, a semisimple  $\BG$-equivariant perverse sheaf of geometric
origin, in the sense of \cite{BBD},  one has
a natural graded space isomorphism with the non-equivariant cohomology
of $\L$, cf. \cite{G1},
\beq{fixo}
\ggr^E_\idot H_t(Y,\L)\cong H^\hdot_o(Y,\L)\cong H^\hdot(Y,\L),
\qquad t\in \Lie\BG.
\eeq

A similar result also holds for the constant sheaf $\L=\C_Y$, provided
the spectral sequence for the equivariant cohomology of $Y$ collapses, see eg.
\cite{GKM}. This is the case, for instance, if the odd cohomology of
$Y$ vanishes. The latter condition  
is known to hold  for any spherical Schubert variety
$Y=\overline{\OO}_\la$.

Next, let $\BT$ be a complex torus. Fix a {\em smooth} complex  projective 
$\BT$-variety $X$  and a subgroup  $\C^\times\sset\BT$.
Assume that the $\C^\times$-action $t: x\mto t\cdot x$,   on $X$,
has a {\em finite}  fixed point set $\WW=X^{\C^\times}$.
For each fixed point $s\in \WW$, define the corresponding attracting set,
$$ X_s = \{ x\in X\enspace\mid\enspace
\lim_{t \to 0}\, t\cdot x = s\},\qquad
 t\in \C^\times.$$

One has the Bialynicki-Birula cell decomposition 
$X = \sqcup_{s\in \WW}\, X_s$. 
Let $\XX_s$ be the closure of  $X_s$. For any $s\in \WW,$ the cell $X_s$
is isomorphic to a $\C$-vector space, see \cite{BB}, moreover, the complex
$IC(\XX_s)$ is known to be pointwise pure,
cf. eg. \cite{KL} or \cite{G2}.

Thus, Corollary \ref{cor1} yields 

\begin{corollary}  \label{thm1}
Assume that 
the  Bialynicki-Birula decomposition $X=\sqcup_{s\in \WW}\, X_s $ is a
stratification of
$X$, and let $s\in \WW$.
Then, the map
$\kappa_*: H^\hdot(\XX_s) \rightarrow \IH^\hdot(\XX_s),$ 
resp.  the map $\kappa_t: H_t(\XX_s) \rightarrow \IH_t(\XX_s),$ is
injective
for any  $t\in \Lie\BT$.\qed
\end{corollary}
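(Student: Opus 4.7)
The plan is to apply Proposition \ref{prop1} and Corollary \ref{cor1} directly to $Y := \XX_s$, equipped with the stratification inherited from the ambient Bialynicki-Birula decomposition. Explicitly, setting $\WW_s := \{r\in\WW\mid X_r\sset\XX_s\}$, I would use the decomposition $\XX_s=\sqcup_{r\in\WW_s}X_r$, which is a stratification of $\XX_s$ precisely because the ambient decomposition of $X$ is assumed to be one.

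Verification of the hypotheses is immediate. The pointwise purity of $IC(\XX_s)$ is exactly the fact recalled in the paragraph preceding the statement (\cite{KL}, \cite{G2}). By Bialynicki-Birula \cite{BB}, each stratum $X_r$ is isomorphic to an affine space $\C^{n_r}$, hence is trivially an affine bundle over a single point, and a point is of course a smooth, connected, simply-connected projective variety. Therefore Proposition \ref{prop1} applies to $Y=\XX_s$ and yields at once the injectivity of $\kappa_*:H^\hdot(\XX_s)\too \IH^\hdot(\XX_s)$, proving the first assertion.

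For the specialized equivariant assertion, observe that each cell $X_r$ is automatically $\BT$-stable since the ambient $\BT$-action commutes with the one-parameter subgroup $\C^\times$ defining the decomposition. Hence Corollary \ref{cor1} applies with $\BG=\BT$, producing a split imbedding $H^\hdot_\BT(\XX_s)\into \IH^\hdot_\BT(\XX_s)$ of finite rank free graded $H^\hdot_\BT(pt)=\C[\Lie\BT]$-modules. Because this imbedding is $H^\hdot_\BT(pt)$-split, it remains injective upon tensoring with any residue field $\C[\Lie\BT]/\m_t$; this yields injectivity of $\kappa_t: H_t(\XX_s)\too \IH_t(\XX_s)$ for every $t\in\Lie\BT$ (every such $t$ is semisimple since $\BT$ is a torus, so $\m_t$ is well defined).

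There is no substantial obstacle here beyond bookkeeping: the essential geometric input, namely the pointwise purity of $IC(\XX_s)$ together with the cellularity of the strata, is already recorded immediately before the statement, and the homological content is packaged by Proposition \ref{prop1} and Corollary \ref{cor1}. The one point that must not be overlooked is that one really needs the \emph{split} form of the imbedding supplied by Corollary \ref{cor1}: a mere injection of $H^\hdot_\BT(pt)$-modules could fail to remain injective after specialization at a nonzero $t\in\Lie\BT$, whereas a split injection survives any base change.
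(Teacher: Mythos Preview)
Your proposal is correct and follows exactly the route the paper indicates: the paper's own ``proof'' is the single phrase ``Thus, Corollary \ref{cor1} yields'' preceding the statement, relying on the pointwise purity and cellularity facts recorded in the paragraph before. Your write-up simply unpacks this, and your explicit remark that one needs the \emph{split} form of the equivariant imbedding (so that injectivity survives specialization at an arbitrary $t\in\Lie\BT$) is a point the paper leaves implicit.
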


\section{Proofs: application of the Satake equivalence}
\subsection{} Write
 $E^*$ for the dual of a vector space $E$, and recall from
 \S\ref{notat} the $\sll$-triple $e,h,f\in\g.
$ Let $\rho\in\h^*$ be the half-sum
of positive roots.
We will identify elements of $\h^*$ with
linear functions on $\g$ via the standard
imbedding $\h^*\into\g^*$.

Given $\la\in \g^*$, write  $\g^\la\sset\g$ for the isotropy Lie algebra
of the  $\ad^*\g$-action on $\g^*$ (the identification
$\g\simeq \g^*$ provided by an invariant form,
makes $\g^\la$  the centralizer of $\la$).
We set $\eta:=\ad^*e(\rho).$ It is easy to check that
one has $\g^\eta=\g^e$, resp. $\g^{\eta+\rho}=\g^{e+h}$.
According to Kostant  \cite{Ko3}, for any $\mu\in\h^*$, 
the Lie algebra $\g^{\eta+\mu}$ is abelian and  one has
$\dim\g^{\eta+\mu}=\rk\g.$

Let  $\Rep\g$ be the  tensor category   of finite dimensional
representations of the Lie algebra $\g$.
Let  $h$ be the semisimple element from the $\sll$-triple, and let  $V\in \Rep\g$.
The eigen-space decomposition with respect to the
$h$-action in $V$
gives  a grading $V_\idot=\bigoplus_{m\in\Z}\ V_m$,
to be referred to as the {\em Brylinski grading}.
We also define an increasing 
filtration $F_\idot V$, called the {\em Brylinski filtration},
defined by $F_kV:=\bigoplus_{m\leq k} V_m$.

In the special case where $V=\g$, the adjoint representation,
the Brylinski filtration restricts to a filtration on any Lie subalgebra
$\fk\sset\g$. This makes the enveloping algebra $\U\fk$ a filtered
algebra with respect to the induced  filtration (not to be
confused with the standard filtration on an enveloping algebra).

Write $\Vect$
(resp. $\Vect_\idot$), for the tensor category of finite dimensional
{\em filtered}  resp. graded, vector
spaces.
Assigning to a $\g$-representation the underlying vector space
equipped with the  Brylinski filtration, resp.
Brylinski grading,
yields a fiber functor $F: \Rep\g\to\Vect,$
resp. $\Rep\g\to\Vect_\idot.$

\subsection{}
We have the Langlands dual data $(\Gv,\Tv)$, see \S\ref{Ldual},
and we put $\tf=\Lie\Tv.$
Thus, there is a diagram of canonical morphisms
of affine algebraic varieties
\beq{transfer}
\xymatrix{
\eta+\h^*\ar[rrr]^<>(0.5){\eta+\mu\,\mto\, W(\mu)} &&&
 \;\hw{\;\simeq\;}\tf/W{\;\simeq\;} \Specm\big(\C[\Lie\Gv]^{\Ad\Gv}\big),
}
\eeq
where  the last isomorphism is the Chevalley isomorphism.

We may  view
 $\Gv$ as a subgroup  of $\Gv[[z]]$ formed by 
constant loops. The loop Grassmannian thus becomes a
 topological $\Gv$-space with respect to the left $\Gv$-action.
Therefore, associated with any $\Gv$-equivariant complex $\L$,
on $\gr$, and an element $\mu\in\h^*=\tf,$ there is a  filtered vector space
$H_\mu(\gr,\L)$, a specialized $\Gv$-equivariant cohomology group, see \S\ref{equiv_sec}.
Equivalently, the same group may be obtained
 by  specializing $H^\hdot_\Tv(\gr,\L)\cong
\C[\tf]\bigotimes_{\C[\tf]^W}H^\hdot_\Gv(\gr,\L)$,
the $\Tv$-equivariant cohomology,
at the point $\mu\in\tf.$

In   [G1, Proposition 1.7.2], we have proved, cf.
 also \cite{BFM},

\begin{proposition}\label{HGGr}
For any $\mu\in\h^*$, there is a canonical  filtered algebra isomorphism
\beq{alg}
\varphi_\mu:\ H_\mu(\gr)\iso \U\g^{\eta+\mu}.
\eeq

For $\mu=o$ (the origin), this isomorphism respects the canonical gradings on both sides.
\end{proposition}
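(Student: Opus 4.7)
The plan is to combine the geometric Satake equivalence with Kostant's regularity theorem for elements of the form $\eta+\mu$. Under Satake, the tensor category of $\Gv[[z]]$-equivariant perverse sheaves on $\gr$ is equivalent to $\Rep\g$; the IC-sheaf $\L_V$ of a spherical Schubert variety corresponds to an irreducible $V\in\Rep\g$, the functor $H^\hdot(\gr,-)$ realizes the canonical fiber functor, and convolution of sheaves matches tensor product of representations. Taking $\Gv$-equivariant cohomology upgrades this picture: $H^\hdot_\Gv(\gr,\L_V)$ is a free graded $H^\hdot_\Gv(pt)$-module of rank $\dim V$, on which $H^\hdot_\Gv(\gr)$ acts by cup product. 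Via $H^\hdot_\Gv(pt)\cong\C[\Lie\Gv]^{\Ad\Gv}$ and the identification \eqref{transfer}, specialization at the maximal ideal corresponding to the point $\eta+\mu$ produces the filtered algebra $H_\mu(\gr)$ acting on $V\cong H_\mu(\gr,\L_V)$ for every $V\in\Rep\g$.

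I would next identify this action as one factoring through $\U\g^{\eta+\mu}\into\U\g$. The Tannakian viewpoint on equivariant cohomology interprets the Chern class homomorphism $H^\hdot_\Gv(pt)\to\mathrm{End}\,H^\hdot_\Gv(\gr,\L_V)$, at a geometric point $\nu\in\Lie\Gv$, as the action on $V$ of the single element $\nu$ under the tautological $\g\cong\Lie\Gv$-action produced by Satake. Specializing at $\nu=\eta+\mu$ picks out exactly this action; by the Kostant theorem cited at the start of \S 3.1, the element $\eta+\mu$ is regular, with abelian centralizer $\g^{\eta+\mu}$ of dimension $\rk\g$. Consequently the action of $H_\mu(\gr)$ on each $V$ factors through $\U\g^{\eta+\mu}$, and this factorization defines the desired homomorphism $\varphi_\mu:H_\mu(\gr)\to\U\g^{\eta+\mu}$.

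To upgrade $\varphi_\mu$ to a filtered algebra isomorphism, I would run a flat-family Hilbert-series comparison. Parity vanishing for $\gr$, combined with Corollary \ref{cor1}, makes $H^\hdot_\Tv(\gr)$ a free graded $\C[\tf]$-module, so the specializations $H_\mu(\gr)$ share a common Hilbert series as $\mu$ varies. On the other side, $\{\U\g^{\eta+\mu}\}_{\mu\in\h^*}$ is a flat family of polynomial algebras, each of Hilbert series $\prod_i(1-t^{2d_i})^{-1}$ with $d_i$ the exponents of $\g$, since every $\g^{\eta+\mu}$ is abelian of dimension $\rk\g$. It therefore suffices to verify bijectivity at $\mu=o$, where the statement reduces to the graded algebra isomorphism $H^\hdot(\gr)\cong\U\g^e$ already cited in Remark \ref{remarks}(i). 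Injectivity of $\varphi_\mu$ is inherited from faithfulness of the Satake fiber functor applied to $\bigoplus_V V$.

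The main obstacle is the second step: matching the cup-product action of Chern classes from $H^\hdot_\Gv(pt)$ on $H^\hdot_\Gv(\gr,\L_V)$ with the action of $\g^{\eta+\mu}\sset\g$ on $V$ produced by Satake. Reconciling these two a priori distinct actions of $\Lie\Gv$ requires a careful analysis of the Mirkovic-Vilonen weight decomposition on $H^\hdot(\gr,\L_V)$ and its compatibility with the Brylinski-Kostant filtration, and forms the technical core of the argument in \cite{G1} and \cite{BFM}.
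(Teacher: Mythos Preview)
The paper does not prove this proposition; it is quoted from \cite[Proposition~1.7.2]{G1} (with a pointer to \cite{BFM}), so there is no in-paper argument to compare against. Your sketch is, in outline, the strategy of those references: use geometric Satake to realize $H_\mu(\gr)$ as tensor-endomorphisms of the fiber functor, and then identify that endomorphism algebra with the centralizer algebra on the Kostant slice.

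That said, your second paragraph contains a real confusion that you should fix before calling this a proof. First, $\g$ and $\Lie\Gv$ are \emph{not} isomorphic as Lie algebras in general (e.g.\ types $B_n/C_n$), so ``the tautological $\g\cong\Lie\Gv$-action produced by Satake'' is not a thing; Satake produces a $\g$-action on $V$, while $H^\hdot_\Gv(pt)\cong\C[\Lie\Gv]^{\Ad\Gv}$ lives on the $\Gv$-side. The only bridge between the two is the chain of Chevalley isomorphisms in \eqref{transfer}, which identifies closed points of $\Specm H^\hdot_\Gv(pt)$ with points $\eta+\mu$ of the Kostant slice in $\g^*$, not with individual elements of $\Lie\Gv$ acting on $V$. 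Second, the map you analyze, $H^\hdot_\Gv(pt)\to\mathrm{End}\,H^\hdot_\Gv(\gr,\L_V)$, becomes scalar after specialization at a maximal ideal and so cannot by itself produce $\U\g^{\eta+\mu}$; the object whose specialization you must identify is the \emph{relative} endomorphism algebra coming from the cup-product action of $H^\hdot_\Gv(\gr)$ (equivalently, the Tannakian group scheme of the $H^\hdot_\Gv(pt)$-linear fiber functor), and it is this that \cite{G1},\cite{BFM} match with the regular centralizer group scheme over the slice. Your final paragraph already concedes that this matching is the technical core; once you straighten out the $\g$ vs.\ $\Lie\Gv$ issue and aim at $H^\hdot_\Gv(\gr)$ rather than $H^\hdot_\Gv(pt)$, your Hilbert-series/flat-family reduction to the $\mu=o$ case is a reasonable way to finish.
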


\begin{remark}\label{mu0} For $\mu=o,$ we have
$H^\hdot_o(\gr)=H^\hdot(\gr),$
by
  \eqref{fixo}, and \eqref{alg}  reduces
to the graded isomorphism mentioned in Remark \ref{remarks}(ii).
\end{remark}

\subsection{}\label{proof2}
Let $\pgr$ be the tensor category 
of  $\Gv[[z]]$-equivariant 
perverse sheaves on $\gr$ with compact support, and equipped with a monoidal
structure given by  convolution.
The category $\pgr$ is known to be semisimple, cf. eg. \cite{MV}.
Taking specialized equivariant cohomology 
gives a family fiber
functors parametrized by the elements $\mu\in\h^*,$
$$
H_\mu(\gr,-): \pgr\to \Vect\quad \big(\text{resp.}\en H^\hdot_o(\gr,-):
\pgr\to \Vect_\idot\big),
\quad \L \mto H_\mu(\gr,\ \L).
\qquad
$$ 

Using the isomorphism in  \eqref{fixo}, we will often identify specialized and ordinary cohomology
functors $H^\hdot_o(\gr,-)=H^\hdot(\gr,-).$

View $\rho$ as an element of $\tf$ and observe that the $\rho$-fixed
point set in $\gr$ is equal to the set \eqref{lattice}, of 
$\Tv$-fixed points. 
  For each $\mu\in \h_{_\Z}^*$,
let $i_\mu: \{\mu\}\into\gr$ denote the corresponding one
point imbedding. 
The Localization theorem in equivariant cohomology
yields
 the following {\bf fixed point decomposition}, see [G1, (3.6.1)], 

\begin{equation}\label{fp_decomposition}
H_\rho(\gr,\L)= \oplus_{\mu\in\h_{_\Z}^*}\en H_\rho(i_\mu^!\L),\qquad \forall \L\in \pgr\,.
\end{equation}

\begin{theorem}[Geometric Satake equivalence]\label{sat}
\vi There is a
monoidal equivalence  $\dis\sat: \pgr\iso \Rep\g\,$ and, for each $\mu\in\h^*$,  an isomorphism 
$\Phi_\mu:\ H_\mu(\gr,-)$ $\iso
F\ccirc \sat(-)$,  of monoidal functors, such
that
\vskip 3pt

\noindent
\pb{For any dominant weight $\la$, one has an isomorphism
$\ \dis \sat(IC(\overline{\OO}_\lambda))\cong V_\la.$}

\noindent
\pb{For any $\L\in \pgr$ and $u\in H_\mu(\gr)$, the following diagram
commutes:}

\beq{act_diag}
\xymatrix{
H_\mu(\gr,\L)\ar@{=}[d]_<>(0.5){\Phi_\mu}\ar[rrr]^<>(0.5){u:\ c\mto
u\cup c}&&&
H_\mu(\gr,\L)\ar@{=}[d]_<>(0.5){\Phi_\mu}\\
F\ccirc\sat(\L)\ar[rrr]^<>(0.5){F({\varphi_{\mu}(u)})}&&&
F\ccirc\sat(\L).
}
\eeq

\noindent
\pb{The  isomorphism
$\Phi_\mu$ sends the canonical filtration on the specialized equivariant
cohomology
 to the Brylinski filtration.}
\vskip 3pt

\noindent
\vii\parbox[t]{140mm}{For $\mu=\rho$, the  isomorphism
$\Phi_\rho$ sends the 
fixed point decomposition \eqref{fp_decomposition}
 to the weight decomposition
with respect to the Cartan subalgebra $\g^{\eta+\rho}$.}
\vskip 3pt

\noindent
\viii\parbox[t]{140mm}{For $\mu=o$, the natural cohomology grading on 
$H_o^\hdot(\gr, -)$ goes, under the isomorphism
$\Phi_o$, to the  Brylinski grading.\qed}
\end{theorem}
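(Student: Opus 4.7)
The plan is to assemble three pre-existing ingredients: the Mirkovi\'c--Vilonen equivalence \cite{MV}, the ring isomorphism $\varphi_\mu$ of Proposition \ref{HGGr} (proved in \cite{G1} and \cite{BFM}), and the localization theorem in equivariant cohomology. Specifically, \cite{MV} supplies the monoidal equivalence $\sat:\pgr\iso\Rep\g$ together with the identification $\sat(IC(\overline{\OO}_\la))\cong V_\la$, using $H^\hdot(\gr,-)=H_o(\gr,-)$ as a fiber functor; this defines $\Phi_o$ tautologically, and gives the first bullet of (i). For general $\mu$, I would exhibit $\Phi_\mu$ by interpreting $H^\hdot_\Tv(\gr,-)$ as a monoidal functor from $\pgr$ to the category of finitely generated free $\C[\tf]$-modules (freeness comes from the $\Tv$-stable affine cell stratification of $\gr$ and the collapse cited in \S\ref{equiv_sec} via \cite{GKM}), and then specializing at $\mu$. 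The specialization $H_\mu(\gr,-)$ is thus a fiber functor on the neutral Tannakian category $\pgr$, and must therefore be isomorphic to $F\ccirc\sat$.

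The commutativity of \eqref{act_diag} is essentially the content of Proposition \ref{HGGr} in its sheaf-theoretic form: the cup product action of $H_\mu(\gr)$ on $H_\mu(\gr,\L)$ is transported, via $\varphi_\mu$ and $\Phi_\mu$, to the $\U\g^{\eta+\mu}$-action on the underlying vector space of $\sat(\L)$. At $\mu=o$ this is proved in \cite{G1}; the general $\mu$ case is obtained by specializing the $\Tv$-equivariant version. The third bullet of (i) (matching the canonical filtration with the Brylinski filtration) and assertion (iii) (matching, at $\mu=o$, the cohomological grading with the Brylinski grading) both reduce to the following core fact: under $\varphi_o$, the generator of $H^\hdot_{\C^\times}(pt)=\C[t]$ corresponds, up to normalization, to the semisimple element $h$ of the principal $\sll$-triple inside $\U\g^e$; cup product with $t$ raises cohomological degree by $2$, while the eigenvalues of $h$ read off the Brylinski grading on $\sat(\L)$. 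Part (iii) follows immediately; the filtered statement for $\mu\neq o$ then comes from the associated graded.

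Part (ii), the fixed point decomposition at $\mu=\rho$, is a direct application of the localization theorem. Since $\rho$ is regular in $\tf$, the fixed point set of the corresponding $\C^\times\sset\Tv$ on $\gr$ coincides with $\gr^\Tv=\h^*_\Z$ by \eqref{lattice}, and localization yields the direct sum \eqref{fp_decomposition}. Under $\Phi_\rho$ each summand $H_\rho(i_\mu^!\L)$ is forced to be a weight space for the abelian Cartan $\g^{\eta+\rho}$ (which has rank $\rk\g$ by Kostant \cite{Ko3}): indeed, \eqref{act_diag} means that $H_\rho(\gr)$ acts on $H_\rho(i_\mu^!\L)$ through the localization map to $H_\rho(\{\mu\})$, hence through the character of $\U\g^{\eta+\rho}$ determined by evaluation at the point $\mu\in\h^*$.

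The deepest black box, and what I would expect to be the main obstacle if one were to carry out the argument in full, is precisely the identification in \eqref{act_diag} of the cup product action of equivariant cohomology on the cohomology of an \emph{arbitrary} object of $\pgr$ with the $\U\g^{\eta+\mu}$-action on the corresponding representation of $\g$. This is substantially stronger than Proposition \ref{HGGr} in isolation (which, read literally, concerns only $\L=\C_\gr$); its proof requires genuinely sheaf-theoretic input going beyond what is needed to set up the MV equivalence, and all the finer structure in the theorem — the filtrations, the gradings, and the fixed point decomposition — ultimately rests on it.
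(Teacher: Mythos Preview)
The paper does not prove Theorem \ref{sat} at all: the statement ends with a \qed, and the paragraph immediately following it simply records that the theorem was first proved in \cite{G1} (Theorems 1.4.1 and 1.7.6 there), with an alternate approach to the monoidal equivalence due to Mirkovi\'c--Vilonen \cite{MV}. There is no argument to compare against; the result is quoted wholesale from the literature.

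Your outline is a fair high-level summary of how those references assemble the proof, and you correctly single out the hard point: the compatibility in \eqref{act_diag} between the cup-product action of $H_\mu(\gr)$ and the $\U\g^{\eta+\mu}$-action on representations is substantially more than Proposition \ref{HGGr} alone, and is the content of \cite[Thm.~1.7.6]{G1}. One small correction: your mechanism for the third bullet of (i) and for (iii) is not quite right. The element $h$ does \emph{not} lie in $\g^e$, so it cannot be the image of the equivariant parameter under $\varphi_o$; rather, the Brylinski grading on $\sat(\L)$ arises because $\g^e$ is $\ad h$-graded (in positive even degrees equal to twice the exponents), and the cohomological grading on $H^\hdot(\gr)\cong\U\g^e$ matches this $\ad h$-grading. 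The grading on $H^\hdot(\gr,\L)$ then corresponds to the $h$-eigenspace decomposition of the underlying $\g$-module via the compatibility \eqref{act_diag}, not via cup product with an element representing $h$.
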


This theorem was first proved in \cite{G1},
(see Theorem 1.4.1 and
Theorem 1.7.6 in  \cite{G1}), using some earlier results
by Lusztig \cite{Lu}, and a crucial idea due to
Drinfeld. Exploiting some additional
unpublished ideas of 
Drinfeld  concerning the commutativity constraint in $\pgr$,
an alternate, more elegant proof of the monoidal equivalence
 of the theorem was  obtained later
by Mirkovi\'c and Vilonen \cite{MV}.

\subsection{} Recall the notation from \eqref{eqnot} and \eqref{kappa}.

\begin{lemma}\label{lem} For any $\mu\in\h^*,$ and an anti-dominant weight $\la\in\h_{_\Z}^*$,
 there is a filtered algebra isomorphism
\beq{filt}
H_\mu(\overline{\OO}_\la)\cong
\U\g^{\eta+\mu}/\ann[\g^{\eta+\mu};\, v_\la].
\eeq

Furthermore, in the special
case  $\mu=o$, the above isomorphism respects the gradings.
\end{lemma}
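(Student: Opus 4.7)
The plan is to exhibit the desired isomorphism in three moves: construct a natural map from $H_\mu(\overline{\OO}_\la)$ into $V_\la$ via the canonical morphism $\kappa\colon\C_{\overline{\OO}_\la}\to IC(\overline{\OO}_\la)$ and the Satake equivalence, prove this map is injective using the intersection-cohomology results of \S\ref{main}, and finally identify its image as the cyclic $\U\g^{\eta+\mu}$-submodule generated by the lowest weight vector $v_\la$. Concretely, viewing $\kappa$ as a morphism of $\Gv$-equivariant complexes on $\gr$ supported on the closed subvariety $\overline{\OO}_\la$, applying $H^\hdot_\Gv(\gr,-)$ gives an $H^\hdot_\Gv(\gr)$-module map
\[
\kappa_*:\ H^\hdot_\Gv(\overline{\OO}_\la)\,\longrightarrow\,\IH^\hdot_\Gv(\overline{\OO}_\la),
\]
which after specialization at $\mu\in\h^*$ yields $\kappa_\mu\colon H_\mu(\overline{\OO}_\la)\to\IH_\mu(\overline{\OO}_\la)$.

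A generic one-parameter subgroup of $\Tv$ endows $\overline{\OO}_\la$ with a $\Gv$-stable Bialynicki-Birula decomposition by affine cells, and $IC(\overline{\OO}_\la)$ is pointwise pure; the equivariant form of Corollary~\ref{cor1} then shows that $\kappa_*$ is a split embedding of finite-rank free $H^\hdot_\Gv(pt)$-modules, whence $\kappa_\mu$ remains injective after specialization. Theorem~\ref{sat}(i) together with Proposition~\ref{HGGr} and diagram~\eqref{act_diag} identifies, via $\Phi_\mu$, the target $\IH_\mu(\overline{\OO}_\la)$ with $V_\la$ as a filtered $\U\g^{\eta+\mu}$-module; thus $\kappa_\mu$ becomes a $\U\g^{\eta+\mu}$-linear injection $H_\mu(\overline{\OO}_\la)\hookrightarrow V_\la$. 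Moreover, the restriction map $H_\mu(\gr)\twoheadrightarrow H_\mu(\overline{\OO}_\la)$ is a surjective filtered algebra homomorphism, since the Bialynicki-Birula classes on $\overline{\OO}_\la$ lift to $\gr$; hence $H_\mu(\overline{\OO}_\la)$ is cyclic over $\U\g^{\eta+\mu}\cong H_\mu(\gr)$, generated by $1$, and $\mathrm{Im}(\kappa_\mu)=\U\g^{\eta+\mu}\cdot\kappa_\mu(1)$.

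The remaining and most delicate point is to show $\kappa_\mu(1)=c\cdot v_\la$ for some nonzero scalar $c$. The guiding idea is that $\kappa_\mu(1)$ must lie in the lowest piece of the Brylinski filtration on $V_\la$: for $\mu=o$ the class $1\in H^0(\overline{\OO}_\la)$ sits in cohomological degree~$0$, and Theorem~\ref{sat}(iii) identifies this with the minimal $h$-eigenspace of $V_\la$ (after the shift between the naive $IC$ normalization and the Brylinski grading), which for antidominant $\la$ is forced by the principal character of the $\sll$-triple to be the one-dimensional line $\C v_\la$; non-vanishing follows because $\kappa$ restricts to the identity on the smooth open stratum $\OO_\la$. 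The filtered compatibility of $\Phi_\mu$ then transports this identification to all $\mu$. Once $\kappa_\mu(1)=c\cdot v_\la$ is in hand, the kernel of the surjection $\U\g^{\eta+\mu}\twoheadrightarrow H_\mu(\overline{\OO}_\la)$ is precisely $\ann[\g^{\eta+\mu};v_\la]$ — using injectivity of $\kappa_\mu$ to pass from $u\cdot 1=0$ in $H_\mu(\overline{\OO}_\la)$ to $u\cdot v_\la=0$ in $V_\la$ — producing the required filtered algebra isomorphism, graded in the case $\mu=o$. I expect the identification of $\kappa_\mu(1)$ with a nonzero multiple of $v_\la$ to be the principal obstacle; every other step assembles tools already in place.
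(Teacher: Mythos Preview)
Your proof is correct and follows essentially the same route as the paper's own argument: both use the surjectivity of $\res_\la:H_\mu(\gr)\to H_\mu(\overline{\OO}_\la)$ coming from compatible cell decompositions, the injectivity of $\kappa_\mu$ from \S\ref{main}, and the Satake identification $\IH_\mu(\overline{\OO}_\la)\cong V_\la$ to conclude that $\Ker(\res_\la)=\ann[H_\mu(\gr);\,\kappa_*(1)]\cong\ann[\g^{\eta+\mu};\,v_\la]$. The one place where you are more careful than the paper is the identification $\Phi_\mu(\kappa_\mu(1))=c\cdot v_\la$: the paper simply invokes Theorem~\ref{sat} for this, whereas you give the underlying reason (the class $1$ sits in the bottom piece of the filtration, which under $\Phi_\mu$ corresponds to the one-dimensional minimal $h$-eigenspace $\C v_\la$), and your argument there is sound.
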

\begin{proof} We write $\ann[-;\, \kappa_*(1)]$ for an annihilator of the
element $\kappa_*(1)\in \IH_\mu(\overline{\OO}_\lambda).$

One has a natural  restriction map
$\res_\la: H_\mu(\gr)\to
H_\mu(\overline{\OO}_\la)$.
This map  is surjective since
the spaces $\overline{\OO}_\la$ and $\gr$ have compatible
cell decompositions by  Iwahori-orbits (which have  {\em even real}
dimension).
By 
Corollary \ref{thm1} applied to the variety
  $X=\overline{\OO}_\lambda$,
we have
$$
\res_\la\big(\ann[H_\mu(\gr);\, \kappa_*(1)]\big)\sset
\ann[H_\mu(\overline{\OO}_\lambda);\, \kappa_*(1)]=0.
$$

This yields an inclusion
$\ann[H_\mu(\gr);\, \kappa_*(1)]$
$\sset\Ker(\res_\la).$
Clearly, the $H_\mu(\gr)$-action on $\IH_\mu(\overline{\OO}_\la)$
factors through the restriction $H_\mu(\gr)\onto
H_\mu(\overline{\OO}_\la)$.
Hence, we must have $\ann[H_\mu(\gr);\, \kappa_*(1)]=\Ker(\res_\la).$
The map $\res_\la$ therefore induces an isomorphism
\beq{annann}
H_\mu(\gr)/\ann[H_\mu(\gr);\, \kappa_*(1)]\iso
H_\mu(\overline{\OO}_\lambda).
\eeq

On the other hand, 
the isomorphism
$\varphi_\mu: H_\mu(\gr)\iso \U\g^{\eta+\mu}$ in
\eqref{alg} provides, by  Corollary \ref{sat}, the following  bijections
\begin{align*}
&\ann[H_\mu(\gr);\, \kappa_*(1)]\iso\ann[\g^{\eta+\mu};\, v_\la],
\quad\text{resp}.\\
&\ann[H_\mu(\gr);\, \IH_\mu(\overline{\OO}_\la)]\iso
\ann[\g^{\eta+\mu};\, V_\la].
\end{align*}
The isomorphism of the lemma now follows  from \eqref{annann}.
\end{proof}

\begin{proof}[Proof of Theorem \ref{peterson}] The first isomorphism in
\eqref{2iso}
is the statement of Lemma \ref{lem} for $\mu=o$.
To prove the second isomorphism in \eqref{2iso}, we take $\mu=\rho$.
By Proposition \ref{HGGr}, we get
$H_\rho(\gr)=\U\g^{\eta+\rho}=\U\g^{e+h} {\;\cong\:}\U\g^h =\U\h,$
where the  third isomorphism is given by conjugation by
$u:=\exp e,$ a
unipotent  element. It is straightforward to verify
that such a conjugation sends the Brylinski filtration 
on $\g^{\eta+\rho}$ to the filtration on $\h$
that was defined in \S\ref{g^e}.
Further, it is clear from Theorem \ref{sat}(ii) 
that   the ideal
$\ann[U(g^{\eta+\mu});\, V_\la]\sset \U\g^{\eta+\rho}$
goes, via conjugation by $u$, to
 the ideal
$\ann[\h;\, V_\la]= I(V_\la)\sset \U\h$. 
Thus, we obtain 
filtered algebra isomorphisms
\beq{filtalg}
\xymatrix{
H_\rho(\overline{\OO}_\la)\ar@{=}^<>(0.5){\eqref{filt}}[r]&
\U\g^{\eta+\rho}/\ann[\g^{\eta+\rho};\, v_\la]\ar@{=}^<>(0.5){\Ad u}[r]&
\U\h/I(V_\la)=\C[\Specm V_\la].}
\eeq

Now, we combine the above isomorphisms for $\mu=o$ and
for $\mu=\rho$.
In this way, we obtain a  chain of graded algebra isomorphisms
$$
\U\g^e/\ann[\g^e;\, v_\la]=\U\g^\eta/\ann[\g^{\eta};\, v_\la]\cong
H^\hdot(\overline{\OO}_\la)\cong \ggr_\idot
H_\rho(\overline{\OO}_\la)=\ggr_\idot\C[\Specm V_\la].
$$

Thus, the  isomorphism on the right of \eqref{2iso} is provided by the 
composite.
\end{proof}

\begin{proof}[Proof of Proposition \ref{peterson2}] We use Proposition
\ref{HGGr} and Theorem \ref{sat} in the case $\mu=o$, cf. Remark \ref{mu0}.
Thus, we may define the required map $f_o$ as a composite of the following chain
of maps, cf.  \eqref{u2v2},
\begin{align*}V_\la=\IH^\hdot(\overline{\OO}_\la)=
H^\hdot(\overline{\OO}_\la,\
IC(\overline{\OO}_\la))&\too H^\hdot(\OO_\la,\
\jmath_\la^*IC(\overline{\OO}_\la))=H^\hdot(\OO_\la,\ \C_{\OO_\la})\\
&\en=
H^\hdot(\OO_\la)=H^\hdot(\Gv/P_\la)=\sym\h^{W_\la}/\m_o\cd\sym\h^{W_\la}.
\end{align*}

It remains to prove the surjectivity of the  composite map
\eqref{uuvv}. According to the above,  this composite may be identified
 with  the restriction
map  $H^\hdot(\overline{\OO}_\la)\to
H^\hdot(\OO_\la).$ The latter map
is surjective since the spaces in question have compatible
partitions into cells of even real dimension.
\end{proof}

\subsection{Proof of Theorem \ref{key}.}\label{pf_key} Part (i) is clear from 
the proof of Proposition \ref{peterson2} since,
for a minuscule weight $\mu$, each of the imbeddings
in \eqref{u2v2} is a bijection.

To prove (ii),  for any anti-dominant weigth $\la,$ write
$IC_\la:=IC(\overline{\OO}_\lambda).$ 
Let $D^b(\gr, \Tv)$ be the bounded  $\Tv$-equivariant derived category
of constructible complexes on $\gr$, with compact support, as defined in \cite{BL1}. For
any objects $\L,\L'\in D^b(\gr, \Tv),$   we define a family
of {\em specialized equivariant Ext-groups}
$$\Ext_\la(\L,\L'):=\Ext^\hdot_{D^b(\gr, \Tv)}(\L,\L')/\m_\la\cd
\Ext^\hdot_{D^b(\gr, \Tv)}(\L,\L'),\qquad\la\in \h^*.
$$

Let $\mu$ be a minuscule anti-dominant weight,
and let $j_\mu:\ \OO_\mu=\overline{\OO}_\mu\into\gr$ be
the corresponding closed imbedding. Clearly, we have
$IC_\mu=(j_\mu)_!\C_{\OO_\mu}$. 
Further, given $\la\in\h^*$, put 
$ J(\la):=\ann[\g^{\eta+\la};\, V_\mu]$.
From Lemma \ref{lem}
and the proof of Theorem \ref{peterson}, we deduce an isomorphism
$V_\mu=\U\g^{\eta+\la}/ J(\la).$

Let $\MM\in\pgr$ be a perverse sheaf
supported on the connected component of the
loop Grassmannian  that contains the orbit $\OO_\mu$,
and let $V:=\sat(\MM)\in\Rep\g$ be the
representation corresponding to $\MM$ via the Satake equivalence.
By  Theorem 1.10.3 of \cite{G1} (whose proof depends on
\cite{G2} in an essential way), for
any $\la\in\h^*$  there are
canonical  vector space isomorphisms
\begin{align}\label{ext}
H_\la(j_\mu^!\MM)=
 \Ext_\la\big((j_\mu)_!\C_{\OO_\mu},\  \MM\big)&=\xymatrix{
\Ext_\la(IC_\mu,\   \MM)\ar@{=}[rrr]^<>(0.5){\text{\cite{G1}, Thm. 1.10}}&&&
\Hom_{\g^{\eta+\la}}(V_\mu,\  
V)}\nonumber\\
&=\Hom_{\g^{\eta+\la}}(\U\g^{\eta+\la}/ J(\la),\   V)=
V^{ J(\la)}.
\end{align}

For $\la=0$, we have $\g^\eta=\g^e$, in particular,
$\ann[\g^e;\, v_\mu]= J(0)$. In this case, all the
isomorphisms in \eqref{ext} respect the gradings. 
On the other hand, we may take $\la=\rho$.
Then, according to \eqref{filtalg}, we get 
$$V_\mu=H_\rho(\gr,\ IC_\mu)=H_\rho(\OO_\mu)=\U\h/I(\Specm V_\mu).$$
Hence, the ideal $ J(\rho)$  is obtained from $I(\Specm V_\mu)$ via
the adjoint action
by the element $u$, cf. \eqref{filtalg}.  Thus, we obtain 
a graded, resp. filtered,  isomorphism 
\beq{resp}
H^\hdot(j_\mu^!\MM)\cong V^{\ann[\g^e;\, v_\mu]},
\quad\text{resp.}
\quad
H_\rho(j_\mu^!\MM)\cong V^{I(\Specm V_\mu)}.
\eeq

Observe that the complex $j_\mu^!\MM$ is pure, by pointwise purity
of any object of  category $\pgr$.
Hence, the isomorphism in \eqref{fixo} applies to the complex
$\L=j_\mu^!\MM$, and we deduce that
$H^\hdot(j_\mu^!\MM)\cong\ggr H_\rho(j_\mu^!\MM)$.
Further, the fixed point decomposition \eqref{fp_decomposition}
shows that $V^{I(\Specm V_\mu)}= \oplus_{\nu\in W(\mu)} V(\nu)$.
Hence, we compute $\dim V^{I(\Specm V_\mu)}=\dim V(\mu)\cdot |W(\mu)|$,
since $\dim V(\nu)=\dim V(\mu)$ for any $\nu\in W(\mu)$.
The proof is completed by equating dimensions of the various isomorphic
vector spaces
considered above,
\begin{align*}
\dim V^{\ann[\g^e;\, v_\mu]}=\dim H^\hdot(j_\mu^!\MM)&=\dim\ggr
H_\rho(j_\mu^!\MM)=H_\rho(j_\mu^!\MM)\\
&=
\dim V^{I(\Specm V_\mu)} =\dim V(\mu)\cdot |W/W_\mu|.
\end{align*}

\section{Kostant's theorem on $\Ad G$-invariant polynomials.}
 \subsection{} Let $G$ be the adjoint group of a semisimple
Lie algebra $\g$. The group $G$ acts on $\g^*$
by the (co)adjoint action.
In this section, we discuss various incarnations of the
 famous result, due to Kostant \cite{Ko1}, concerning the structure of the
 {\em coadjoint quotient} map
$\varpi:\ \g^*\onto \g^*/\!/\Ad G.$

The most essential part of  Kostant's result reads

\begin{theorem}[Kostant]\label{orig}
The algebra $\sym\g$ is a free graded $\sym\g ^G $-module;
furthermore,
the quotient algebra $\sym\g/\m\cd\sym\g$ is a normal domain, for any maximal ideal
$\m\sset \sym\g ^G $.
\end{theorem}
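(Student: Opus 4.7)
My approach is to deduce both assertions from cohomological properties of the loop Grassmannian $\gr$, building on the machinery developed in Section 3. The Chevalley isomorphism $\sym\h^W \cong \sym\g^G$ already provides a topological model for the base of the coadjoint quotient $\varpi: \g^*\to \g^*/\!/G$, since $\sym\h^W = H^\hdot_\Gv(pt)$; and the identification $H^\hdot(\gr) \cong \sym\g^e$ from \cite[Proposition~1.7.2]{G1} (cf. Remark \ref{mu0}), together with its equivariant refinement underlying Proposition \ref{HGGr}, supplies the cohomological model on the ``total-space'' side.

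For the freeness statement, the key observation is that for each dominant coweight $\la$ the graded $\sym\h^W$-module $H^\hdot_\Gv(\overline{\OO}_\la,\, IC(\overline{\OO}_\la))$ is free: this follows from equivariant formality, which holds because $\overline{\OO}_\la$ carries an Iwahori cell decomposition with only even-dimensional cells, forcing the equivariant hypercohomology spectral sequence to collapse. Its generic fiber over $\mathrm{Spec}\,\sym\h^W$ is $V_\la$ by Theorem \ref{sat}, so the $\sym\h^W$-rank equals $\dim V_\la$. Decomposing the graded $G$-module $\sym\g = \bigoplus_\la V_\la\otimes N_\la$ into isotypic components, one would identify each multiplicity module $N_\la$ (via the Satake equivalence and an equivariant refinement of the Ext-computation \eqref{ext}) with an appropriate $\sym\h^W$-module built from $H^\hdot_\Gv(\overline{\OO}_\la,\,IC(\overline{\OO}_\la))$, and thereby transport freeness to $\sym\g$ over $\sym\g^G$. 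The rank $\dim V_\la(0)$, which recovers Kostant's harmonic decomposition, is controlled by the $\mu=0$ instance of Theorem \ref{key}(ii).

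The normality of the fibers $\sym\g/\m\cdot\sym\g$ I would establish geometrically. For the augmentation ideal $\m=\m_o$, the fiber is the coordinate ring of the nilpotent cone $\mathcal{N}\sset\g^*$; irreducibility is immediate from density of the principal nilpotent orbit, while normality follows from the Springer resolution $\pi:\widetilde{\mathcal{N}}\to\mathcal{N}$ being a rational resolution (a consequence of the pointwise purity of $\pi_*\C_{\widetilde{\mathcal{N}}}$, in the spirit of Proposition \ref{prop1}, together with Serre's $R_1+S_2$ criterion). For a general maximal ideal $\m_t$, Jordan decomposition and a Luna-slice argument identify the fiber \'etale-locally with the product of the smooth homogeneous space $G/G^t$ and the nilpotent cone of the Levi subalgebra $\g^t$, reducing to the preceding case. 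The main obstacle is precisely the normality of $\mathcal{N}$: freeness is formally controlled by the equivariant-formal structure of $\gr$ and the Satake equivalence, but normality of the nilpotent cone is a genuine geometric statement that requires input beyond the formal framework, and any topological recasting must either recover or appeal to the finer geometric properties of $\mathcal{N}$.
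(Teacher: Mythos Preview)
Your proposal takes a route quite different from the paper's, and has gaps in both halves.

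For freeness, you propose to realize each multiplicity module $N_\la=\Hom_\g(V_\la,\sym\g)$ as ``an appropriate $\sym\h^W$-module built from $H^\hdot_\Gv(\overline{\OO}_\la,IC(\overline{\OO}_\la))$'', but you never say what this module is, and there is no evident candidate. The equivariant intersection cohomology is free of $\sym\h^W$-rank $\dim V_\la$, whereas $N_\la$ should have rank $\dim V_\la(0)$; the Ext-computation \eqref{ext} concerns $\Hom_{\g^e}$, not $\Hom_\g$; and the Satake equivalence produces objects of $\Rep\g$, not the infinite-dimensional $\sym\g$. There is no perverse sheaf on $\gr$ whose equivariant cohomology naturally encodes $\Hom_\g(V_\la,\sym\g)$, so the key identification is missing rather than merely sketched. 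For normality, you invoke rationality of the Springer resolution via ``pointwise purity of $\pi_*\C_{\widetilde{\mathcal N}}$'', but purity of a constructible pushforward says nothing about the coherent pushforward $R\pi_*\oo_{\widetilde{\mathcal N}}$; these live in different worlds. You then fall back on Serre's $R_1+S_2$ criterion, which is exactly the input the paper's argument is designed to bypass (see the paragraph following the statement of Theorem \ref{orig}).

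The paper's proof is in fact entirely disjoint from the loop-Grassmannian machinery of \S\S1--3. It passes through the Grothendieck--Springer variety $\wg^*$ and Theorem \ref{mil}: a Koszul resolution of $\Sym(\g_\B/\nf_\B)$ by $\sym\g\otimes\Om^\hdot_\B$, together with the Hodge vanishing $H^{p,q}(\B)=0$ for $p\neq q$, yields $\Gamma(\wg^*,\oo_{\wg^*})\cong\sym\g\otimes_{\sym\h^W}\sym\h$ and $H^{>0}(\wg^*,\oo_{\wg^*})=0$. Freeness of $\sym\g$ over $\sym\g^G$ then follows by faithfully flat base change along $\sym\h^W\hookrightarrow\sym\h$, using that $\nu:\wg^*\to\h^*$ is smooth; and normality of each fiber $\sym\g/\m\cdot\sym\g$ holds because it coincides with $\Gamma(\wgc,\oo_{\wgc})$ for the smooth connected variety $\wgc=\nu^{-1}(\chi)$. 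No Springer resolution, no Luna slice, no Serre criterion.
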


A key geometric result in the original approach of 
\cite{Ko1} was the normality of the {\em nilpotent variety} in $\g$.
 Kostant's 
proof of that result involved certain properties of Cohen-Macauley
rings which boil down, essentially, to the {\em Serre normality
criterion}, see \cite{Se}. A simplified  argument, that bypasses
references to Serre's criterion, was later found by Bernstein
and Lunts \cite{BL2}, cf. also \cite{CG}, \S6.7 for an exposition.
Below, we discuss an alternate approach to Theorem \ref{orig}
which is
of a more `topological' nature.

To explain our approach, it will be convenient to reformulate
Kostant's result. To this end, 
let $\B$ be the flag variety for $G$, viewed as the
 variety
of Borel subalgebras in $\g$. 
One introduces an incidence variety $\wg^*:= \{(x, \b)\in \g^*\times\B\;\big|\;
x|_{[\b,\b]}=0\}\,$ that fits into  the following
commutative diagram,
\begin{equation}\label{gro}
\xymatrix{
&\g^*\ar[dl]_<>(0.5){\varpi}&&\wg^*\;\ar[ll]_<>(0.5){\mu}\ar[dl]_<>(0.5){\nu}
\ar[dr]^<>(0.5){p}\ar@{^{(}->}[rr]^<>(0.5){\epsilon}&&\g^*\times\B\ar[dl]^<>(0.5){pr_2}\\
{\h}^*/W&&\h^*\ar[ll]_<>(0.5){\pi}&&\B&
}
\end{equation}

In this diagram,
$\pi$ is a finite flat morphism, since $\C[{\h}]$ is free
over $\C[{\h}]^W$,  and the map $\nu: x\mto x|_{\b/[\b,\b]}$ is a smooth  morphism
induced by restriction via the diagram
$\g\hookleftarrow\b\onto\b/[\b,\b]=:\h,\ \forall\b\in\B,$ cf. eg. 
\cite{CG}, formula (7.3.1).
Further, the   map $\mu$, resp. $p=pr_2\ccirc\epsilon $, is induced by the first,
resp.   second,  projection of $\g\times\B$.

The commutative parallelogram on the left of \eqref{gro}
 is known as the {\em  Grothendieck-Springer resolution},
 cf. eg.  \cite[\S3.2]{CG}. This  parallelogram is {\em not}
a cartesian square; in fact, the map $\mu\times\nu$ may be factored as
 follows
\beq{gh}
\xymatrix{
\wg^*\en
\ar@/^1.5pc/[rrrr]|-{\,\mu\times\nu\,}
\ar@{->>}[rr]_<>(0.5){\psi}&&\en\g^*\times_{\hw}\h^*\en\ar@{^{(}->}[rr]&&
 \en\g^*\times\h^*.
}
\eeq
Here, the map $\psi$ is dominant and proper, hence,  surjective.
Further, by definition, we have $\C[\g^*\times_{\hw}\h^*]=\sym\g\o_{\sym\h^W}\sym\h.$

The following version of Kostant's result is often more convenient,
cf. eg. \cite{BBM}.
\begin{theorem}\label{mil} The morphism \eqref{gh}
induces an  algebra
isomorphism 
$$\psi^*:\ \sym\g\o_{\sym\h^W}\sym\h\iso\Gamma(\wg^*,\oo_{\wg^*}).$$

Furthermore, we have $H^i({\wg^*},\oo_{\wg^*})=0,$ for any $i>0$.
\end{theorem}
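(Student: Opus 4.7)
The plan is to exploit the fact that $p: \wg^* \to \B$ is an affine vector bundle. Fix a Borel subgroup $B \subset \Gv$... no, $B \subset G$, with Lie algebra $\b$ and nilradical $\nf = [\b,\b]$. Then one has a $G$-equivariant identification $\wg^* \cong G \times_B (\g/\nf)^*$ of vector bundles over $\B = G/B$. Since $p$ is affine, all higher direct images vanish and the Leray spectral sequence collapses to give
\[
H^i(\wg^*, \oo_{\wg^*}) \;=\; H^i\bigl(\B,\ p_* \oo_{\wg^*}\bigr) \;=\; H^i\bigl(\B,\ G\times_B \sym(\g/\nf)\bigr), \qquad i\geq 0.
\]

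To dispose of the higher vanishing, I would analyze $\g/\nf$ as a $B$-module via the short exact sequence $0 \to \h \to \g/\nf \to \g/\b \to 0,$ in which $\h$ carries the trivial $B$-action and $\g/\b$ has weights equal to the negative roots. Hence all $T$-weights of $\g/\nf$ are antidominant, and each symmetric power $\sym^k(\g/\nf)$ admits a filtration by $B$-submodules whose one-dimensional quotients have antidominant weights. Kempf's vanishing theorem (equivalently, the dominant case of Borel-Weil-Bott) then yields $H^i(\B,\ G\times_B \sym^k(\g/\nf)) = 0$ for all $i > 0$ and $k \geq 0$, proving the second assertion.

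For the isomorphism $\psi^*$ on global sections, I would proceed as follows. Injectivity is immediate since $\wg^*$ is irreducible and $\psi$ is dominant (being surjective onto the target). For surjectivity, it suffices to prove the sheaf equality $\psi_* \oo_{\wg^*} = \oo_{\g^*\times_\hw \h^*}$; affineness of the target then yields the desired equality of global sections. The morphism $\psi$ is proper and birational (being an isomorphism over the regular semisimple open locus $\g^{*,\rs}\times_{\hw}\h^*$), and $\wg^*$ is smooth; by Zariski's main theorem, $\psi_*\oo_{\wg^*}$ is the normalization of $\oo_{\g^*\times_\hw \h^*}$. The statement thus reduces to normality of $\g^*\times_\hw \h^*$, which by Serre's criterion reduces in turn to (S2) together with (R1). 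The former follows from Cohen-Macaulayness, inherited from the flatness of $\pi$ and $\varpi$ by flat base change.

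The main obstacle is verifying (R1) for $\g^*\times_\hw \h^*$: this requires understanding the structure above the discriminant in $\hw$ and is essentially equivalent to the classical Kostant input about the normality of the nilpotent cone. A genuinely topological alternative would bypass normality altogether: using the vanishing from Step~2 together with equivariant Hirzebruch-Riemann-Roch (or, equivalently, character formulas for induced bundles on $\B$), one computes the graded character of $\Gamma(\wg^*, \oo_{\wg^*})$ via $T$-fixed-point contributions on $\B$; one then matches it termwise with the graded character of $\sym\g\otimes_{\sym\h^W}\sym\h$ (which is computable from flatness of $\pi: \h^*\to \hw$). Combined with the injective graded algebra map $\psi^*$, equality of Hilbert series forces the isomorphism.
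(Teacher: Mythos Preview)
Your Kempf argument for the vanishing has a genuine gap. The $T$-weights of $\g/\nf$ are $0$ (with multiplicity $\rk\g$) together with the negatives $-\alpha$ of the positive roots, and these are \emph{not} all antidominant: whenever $\alpha_i,\alpha_j$ are adjacent simple roots one has $\langle -\alpha_i,\alpha_j^\vee\rangle=-\langle\alpha_i,\alpha_j^\vee\rangle>0$. In fact Borel--Weil--Bott gives $H^1(\B,\,G\times_B\C_{-\alpha_i})\cong\C$ for every simple root $\alpha_i$, so filtering $\sym^k(\g/\nf)$ by one-dimensional $B$-quotients does \emph{not} reduce the question to Kempf; the associated graded of your filtration genuinely carries higher cohomology which must cancel in a spectral sequence you have not analyzed. (The total bundle does have vanishing $H^{>0}$, but your argument does not establish it.)

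The paper avoids this entirely by a different device. Using the identification $\nf_\B\cong T^*_\B$, one resolves $\Sym(\g_\B/\nf_\B)$ by the Koszul complex $\sym\g\otimes\Omega^\hdot_\B$; the resulting spectral sequence has $E_1^{p,-q}=\sym\g\otimes H^p(\B,\Omega^q_\B)=\sym\g\otimes H^{p,q}(\B)$, and Hodge theory for the flag variety ($H^{p,q}(\B)=0$ unless $p=q$) forces degeneration at $E_1$. This single step yields \emph{both} the vanishing $H^{>0}(\wg^*,\oo_{\wg^*})=0$ and the graded identification $\ggr\,\Gamma(\wg^*,\oo_{\wg^*})\cong\sym\g\otimes H^\hdot(\B)$. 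The Borel isomorphism $\sym\h\cong\sym\h^W\otimes H^\hdot(\B)$ then shows that $\sym\g\otimes_{\sym\h^W}\sym\h$ has the same graded dimensions, so the injectivity of $\psi^*$ (which you observe correctly) finishes the proof.

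Your proposed HRR/character alternative is in the right spirit---ultimately it is a Hilbert-series comparison---but the Koszul/Hodge argument delivers the needed dimension count directly, without separating the vanishing step from the character computation. And as you already recognize, the normality route is essentially circular here: the point of this theorem in the paper is precisely to \emph{deduce} Kostant's results (including normality of the fibers of $\varpi$) rather than to assume them.
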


\begin{proof}[Theorem \ref{mil} implies  Theorem \ref{orig}]
First of all, we recall that any
nonnegatively graded flat  $\sym\g ^G $-module is necessarily free.
Therefore, to prove the first statement of  Theorem \ref{orig},
it suffices to show that $\sym\g$ is a  flat  $\sym\g ^G $-module.
But this follows from  Theorem \ref{mil} by a flat base change.
In fact, the classical result says that
 $\sym\h$ is a free $\sym\h^W$-module of rank $|W|$.
Hence,  $\sym\g$ is flat over  $\sym\g ^G $
if and only if  $\sym\g\o_{\sym\h^W}\sym\h$ is flat over  $\sym\h$,
by the  base change. Applying Theorem \ref{mil}, we see that
it suffices to prove that $\Gamma({\wg^*},\oo_{\wg^*})$
is  flat over  $\sym\h$. But this is clear since 
 $\nu$  is a smooth morphism, hence, it is flat.

The second  statement of  Theorem \ref{orig} is proved similarly
by restricting diagram  \eqref{gro} to a fiber
of the map
$\nu$, which is a smooth morphism. Specifically, given a maximal ideal $\m\sset \sym\g ^G =
\sym\h^W$, choose a maximal ideal of the algebra $\sym\h=\C[\h^*]$ over $\m$, i.e. an element $\chi\in\h^*$
 such that, for the  maximal ideal $(\chi)\sset
\C[\h^*]$  associated with $\chi$, one has
$(\chi)\cap \sym\h^W=\m$.
We set  $\C_\chi=\sym\h/(\chi),$ resp. $\C_\m=\sym\h^W/\m.$
Also, let $\wgc:=\nu\inv(\chi)$ be the fiber of the map $\nu$
over the  point $\chi\in\h^*$.

Then, from  diagram  \eqref{gro}, we get
$\Gamma({\wgc},\ \oo_{\wgc})=\Gamma(\wg^*, \oo_{\wg^*})\o_{\sym\h}\C_\chi$.
Hence, using  Theorem \ref{mil} we find
$$
\Gamma({\wgc}, \oo_{\wgc})=\big(\sym\g\o_{\sym\h^W}\sym\h\big)\o_{\sym\h}\C_\chi
=\sym\g\o_{\sym\h^W}\C_\m=\sym\g/\m\cd\sym\g.
$$
The scheme ${\wgc}$ being a smooth connected variety, we conclude
that the algebra $\Gamma({\wgc},\ \oo_{\wgc})$ is a normal domain,
and we are done.
\end{proof}

\subsection{A `topological' proof of  Theorem \ref{mil}.} 
Given  a coherent sheaf $\mathcal V$,
we write  $\Sym \mathcal V$ for the
corresponding symmetric algebra, a quasi-coherent sheaf.

Write $\g_\B:=\g\o\oo_\B$ for the trivial
sheaf on $\B$ with fiber $\g$, and  $\nf_\B\sset \g_\B$ for a ({\em nontrivial}) subsheaf
whose fiber
at each point $\b\in\B$ is $\nf=[\b,\b]$, the nilradical
 of the Borel subagebra $\b$.
There is a canonical isomorphism
\beq{wgwg}
\nf_\B\cong T^*_\B:=\text{cotangent
sheaf on } \B.
\eeq
 
The closed imbedding
$\epsilon,$  in diagram \eqref{gro},  makes $p: {\wg^*}\to\B$ a  sub-vector bundle of
 the trivial bundle $pr_2: \g^*\times\B\to\B$.
It is clear that the sheaf of sections of the vector bundle dual to  ${\wg^*}$
is isomorphic naturally to $\g_\B/\nf_\B$, a quotient sheaf.
The vector bundle projection $p$   being affine,
 we get $H^i({\wg^*},\ \oo_{\wg^*})=H^i(\B,\ p_\idot\oo_{\wg^*})=
H^i(\B,\  \Sym(\g_\B/\nf_\B) ).$

The computation of the rightmost sheaf cohomology group 
given below is based on
 an old  idea
that goes back to Jantzen and  Mili\v ci\v c.

For each $q=0,1,\ldots,$ let
${\mathcal
K}^q:=(\Sym\g_\B)\bigotimes_{\oo_\B}(\wedge^q\nf_\B)=
\sym\g\ \otimes\ \wedge^q\nf_\B$,  a quasi-coherent sheaf on
$\B$. The sheaf imbedding $\nf_\B\into\g_\B$ gives, in a
standard way, a  Koszul differential
 ${\mathcal K}^\hdot\to{\mathcal K}^{\hdot-1}$. This  differential is a
sheaf morphism that makes ${\mathcal K}^\hdot$ a resolution of the sheaf $\Sym(\g_\B/\nf_\B) $.
Further, using  \eqref{wgwg}, one can write
 $\wedge^\hdot\nf_\B= \wedge^\hdot T^*_\B=\Om^\hdot_\B,$ the sheaf of algebraic
 differential forms
on $\B$.
Thus, we obtain ${\mathcal K}^\hdot=\sym\g\o\Om^\hdot_\B$.

The  resolution ${\mathcal K}^\hdot\to\Sym(\g_\B/\nf_\B) \to0$ may be used for computing
 sheaf cohomology. Specifically, there is  a convergent third quadrant 
spectral sequence 
$$
E_1^{p,-q}=H^p(\B,\ \sym\g\o\Om^q_\B)
\en\Rightarrow\en
E_\infty^{p,-q}=\ggr_pH^{p-q}\big(\B,\ \Sym(\g_\B/\nf_\B)\big)=
\ggr_pH^{p-q}({\wg^*},\ \oo_{\wg^*}).
$$

We have $H^p(\B,\ \sym\g\o\Om^q_\B)=\sym\g\o
H^p(\B,\ \Om^q_\B)=\sym\g\o H^{p,q}(\B)$,
by the Hodge decomposition for the de Rham cohomology of
a K\"ahler
manifold.
Recall that, for the flag manifold $\B$,
the group $H^{p,q}(\B)$  vanishes unless $p=q$.
 We conclude that the spectral sequence degenerates
at the $E_1$-term. Moreover, we deduce that the cohomology group
$\ggr H^i({\wg^*},\ \oo_{\wg^*})$ vanishes for any $i>0$, and there
are graded algebra isomorphisms
\beq{hilb}
\ggr H^0({\wg^*},\ \oo_{\wg^*})= E_\infty=
E_1=\sym\g\o H^\hdot(\B).
\eeq

Here, the grading on the leftmost term is the one that comes from the
$\C^\times$-action on $\wg^*$ by dilations along the fibers, and the grading on
the tensor product on the right is the tensor product  grading
induced by the natural grading on a symmetric algebra
and the grading on $H^\hdot(\B)$ such that $H^{2i}(\B)$
is placed in degree $i$. Thus, we have proved that
$H^i({\wg^*},\ \oo_{\wg^*})=0, \ \forall i>0.$

To complete the proof, we use the Borel isomorphism and write $\sym\h$, 
a free graded $\sym\h^W$-module, in the
form
$\sym\h=\sym\h^W\o H^\hdot(\B)$, where the cohomology of
the flag manifold is equipped with the same grading as above.

Further,
view the map
$\psi^*$, in the statement of the theorem, as a map of 
$\C[\g^*]$-modules. 
The kernel of this map must be a torsion $\C[\g^*]$-submodule since the
morphism \eqref{gh} is easily seen to be an isomorphism
at the generic point. On the other hand, we have
\beq{hilb2}
\sym\g\o_{\sym\h^W}\sym\h=\sym\g\o_{\sym\h^W}\big(\sym\h^W\o H^\hdot(\B)\big)=
\C[\g^*]\o H^\hdot(\B),
\eeq
 is a finite rank free  $\C[\g^*]$-module.
Therefore, this   $\C[\g^*]$-module has no  torsion submodules
and we conclude that the map $\psi^*$ is injective.

To prove surjectivity, we 
let the group $\C^\times$ act
on the  vector space $\g\times\h$, resp. on ${\wg^*}$, by
dilations.
This makes the map $\psi:\ {\wg^*}\to\g^*\otimes_{\hw}\h^*$
 a $\C^\times$-equivariant morphism
of $\C^\times$-varieties.
It follows that
$\psi^*$, the pull-back  map,  is an injective degree preserving 
map of the corresponding coordinate rings.
Comparing formulas \eqref{hilb} and \eqref{hilb2} we see
that, for each degree, the corresponding homogeneous components of  $\sym\g\o_{\sym\h^W}\sym\h$
and $\Gamma({\wg^*},\ \oo_{\wg^*})$ have equal dimensions.
Thus, the map $\psi^*$ must be an isomorphism.
\qed

\begin{remarks} \vi
Let $\g^{\rs}\subset \g^*$ be the union
of   $\Ad G$-conjugacy classes  in $\g^*$ of maximal dimension, and 
 put $\wg^{\rs} :=
\mu^{-1}({\g}^{\rs})$.  We claim that the restriction of the  morphism
$\psi$ in \eqref{gh} to
$\wg^{\rs}$
yields
 an isomorphism of algebraic varieties
$$
\psi^{\rs}:\
 \wg^{\rs} \stackrel{\sim}{\longrightarrow}
 \g^{\rs}\times_{_{{\h^*}/W}} {\h^*}\,.
$$

To see this, one verifies first, using Jordan decomposition in $\g$,
that  the map $\,\mu^{\rs}: \wg^{\rs} \to {\g}^{\rs}\,$
 has finite fibers. It follows, since ${\h^*}$ 
is finite  over ${\h^*}/W$,  that  the map $\psi^{\rs}$  has
finite fibers as well. Furthermore, using that $\psi^{\rs}$ is also proper,
we deduce that it is, in fact, a finite dominant  morphism between smooth schemes.
Moreover, it is clear that  the map $\psi^{\rs}$ is one-to-one over
the Zariski open subset of {\em semisimple} regular elements of $\g$.
Therefore, such a morphism must automatically be  an isomorphism.\qed
\smallskip

\vii  Kostant showed that $\varpi^{-1}(0)$,
the scheme-theoretic zero fiber of the morphism $\varpi$, is reduced at any point of
 $\varpi^{-1}(0)\cap\g^{\rs}$.
To prove this, he   verified 
by a direct
computation
that the generators of $\cg^G$ have linearly independent
differentials
at any point of $\g^{\rs}$,
see  \cite{Ko2}, and also \cite[\S6.7]{CG} for a slightly different argument. 

Here is an alternate argument
which involves no computation and is independent of \cite{Ko2}.
By Remark (i), we may identify 
$\nu: \wg^{\rs} \to {\h^*}$, the restriction of the smooth morphism $\nu$
to $\g^{\rs}$, with the projection
$\g^{\rs}\times_{_{{\h^*}/W}} {\h^*}\to$
${\h^*}$. Applying to this projection 
base change with respect to the flat
map ${\h^*}\to{\h^*}/W$ in diagram
(\ref{gro}), we deduce that the morphism
$\varpi: \g^{\rs} \to {\h^*}/W$ is also smooth.
Hence its zero-fiber is reduced.\qed
\end{remarks}

\footnotesize{

}

\noindent
\footnotesize{\hphantom{x}\ab
 Department of Mathematics, University of Chicago, 
Chicago IL
60637, USA; 

\noindent
\hphantom{x}\ab ${\tt{ginzburg@math.uchicago.edu}}$

\end{document}

Let g be a complex semisimple Lie algebra and let G' be the Langlands 
dual group. We give a description of the cohomology algebra of an 
arbitrary spherical Schubert variety in the loop Grassmannian for G' as
a quotient of the form Sym(g^e)/I. Here, I is an appropriate ideal in
the symmetric algebra of g^e, the centralizer of a principal nilpotent in g.

We also discuss a `topological' proof of Kostant's famous result 
on the structure of the polynomial algebra on g.
\\

arXiv:0710.1443

psswd:  h8ri8

 supercedes arXiv:math.AG/9803141
vancouver